\let\OLDthebibliography\thebibliography
\renewcommand\thebibliography[1]{
  \OLDthebibliography{#1}
  \setlength{\parskip}{0pt}
  \setlength{\itemsep}{0pt plus 0.3ex}
}
\def\peq{\preccurlyeq}
\definecolor{green1}{RGB}{153,216,201}
\definecolor{green2}{RGB}{44,190,95}
\definecolor{blue1}{RGB}{158,202,225}
\definecolor{blue2}{RGB}{49,130,189}
\definecolor{RedOrange}{cmyk}{0,0.77,0.87,0} 
\definecolor{Mahogany}{cmyk}{0,0.85,0.87,0.35} 
\definecolor{Maroon}{cmyk}{0,0.87,0.68,0.32} 
\definecolor{BrickRed}{cmyk}{0,0.89,0.94,0.28} 
\definecolor{Red}{cmyk}{0,1.,1.,0} 
\definecolor{OrangeRed}{cmyk}{0,1.,0.50,0} 
\definecolor{purple}{rgb}{0.8,0.12,0.8}
\definecolor{orange}{rgb}{1.0,0.7,0.0}
\definecolor{pink}{rgb}{1,0.5,0.8}
\definecolor{blackg}{rgb}{0.1,0.25,0.1}
\definecolor{ForestGreen}{cmyk}{0.91,0,0.88,0.42}
\definecolor{Turquoise}{cmyk}{0.85,0,0.20,0}
 \theoremstyle{plain}
 \newtheorem{thm1}{Theorem}
 \newtheorem{cor1}[thm1]{Corollary}
\newtheorem{thm}{Theorem}[section]
\newtheorem{lemma}[thm]{Lemma}
\newtheorem{prop}[thm]{Proposition}
\theoremstyle{definition}
\newtheorem{question}[thm]{Question}
\newtheorem{defn}[thm]{Definition}
\newtheorem{remark}[thm]{Remark}
\newtheorem{example}[thm]{Example}
\numberwithin{equation}{section}
\def\Wext{\widetilde{W}}
\def\sA{\mathsf{A}}
\def\sB{\mathsf{B}}
\def\sC{\mathsf{C}}
\def\sD{\mathsf{D}}
\def\sE{\mathsf{E}}
\def\sF{\mathsf{F}}
\def\sG{\mathsf{G}}
\def\sX{\mathsf{X}}
\def\sH{\mathsf{H}}
\def\sI{\mathsf{I}}
\def\cA{\mathcal{A}}
\def\cB{\mathcal{B}}
\def\cF{\mathcal{F}}
\def\cG{\mathcal{G}}
\def\cL{\mathcal{L}}
\def\cT{\mathcal{T}}
\def\CircFree{\mathsf{CircFree}}
\def\PrefixCircFree{\mathsf{CircFree}_{\peq}}
\def\lex{\mathsf{lex}}
\newcommand{\la}{\lambda}
\newcommand{\Ga}{\Gamma}
\def\NN{\mathbb{N}}
\def\RR{\mathbb{R}}
\def\WW{\mathbb{W}}
\def\ZZ{\mathbb{Z}}
\def\la{\lambda}
\def\<{\langle}
\def\>{\rangle}
\def\la{\lambda}
\def\sR{\mathsf{R}}
\newcommand{\sw}{\mathsf{w}}
\newcommand{\sfp}{\mathsf{p}}
\newcommand{\cLlex}{\mathcal{L}_{\mathsf{lex}}}
\newcommand{\cAlex}{\mathcal{A}_{\mathsf{lex}}}
\newcommand{\st}{\mathsf{state}}
\newcommand{\pa}{\mathsf{path}}
\newcommand{\CircWord}{\mathsf{SimpCircWord}}
\newcommand{\boundL}{\mathbf{b}_{\mathcal{L}}(\varphi)}
\newcommand{\boundG}{\mathbf{b}_{G,S}(\varphi)}
\newcommand{\cellL}{\Gamma_{\mathcal{L}}(\varphi)}
\newcommand{\cellG}{\Gamma_{G,S}(\varphi)}
\newcommand{\boundW}{\mathbf{b}_{W,S}(\varphi)}
\newcommand{\cellW}{\Gamma_{W,S}(\varphi)}
\newcommand{\ba}{\mathbf{a}}
\newcommand{\bb}{\mathbf{b}}
\newcommand{\sq}{\mathsf{q}}
\renewcommand{\@makefnmark}{\mbox{\textsuperscript{}}}
\renewcommand*{\@fnsymbol}[1]{%
  \ensuremath{%
    \ifcase#1%
    \or \dagger 
    \or \ddagger 
    \or \mathsection 
    \or \mathparagraph 
    \or \| 
    \or ** 
    \or \dagger\dagger 
    \or \ddagger\ddagger 
    \else\@ctrerr
    \fi
  }%
}
\title{Bounded weight functions on regular languages and groups}
\author{J. Guilhot\footnote{J. Guilhot passed away on the $27^{\text{th}}$ of July 2025}, E. Little, J. Parkinson}
\date{\today}
\begin{document}

\setlength{\textwidth}{16cm} \setlength{\textheight}{25cm}

\maketitle

\begin{abstract}
We introduce the notion of a \textit{bounded weight function} on a language, and show that the set of bounded weight functions on a regular language is a rational polyhedral cone. We study the \textit{cell} recognised by a bounded weight function (that is, the set of elements of the language where the bound is attained), and show that if the language is regular then this cell is regular. The related notion of a weight function on a finitely generated group is introduced, and the case of Coxeter groups is studied in detail. Applications to the representation theory of weighted Hecke algebras are given. 
\end{abstract}
%


\section{Introduction}

Let $\cL$ be a language over an alphabet~$S$. A \textit{weight function} on $S^*$ is a function $\varphi:S^*\to\RR$ with $\varphi(u\cdot v)=\varphi(u)+\varphi(v)$, and we say that $\varphi$ is \textit{bounded on $\cL$} if there exists $N>0$ such that $\varphi(w)\leq N$ for all $w\in\cL$. If $\varphi$ is bounded on $\cL$ we let 
$$
\boundL=\sup_{w\in\cL}\varphi(w)\quad\text{and}\quad \cellL=\{w\in\cL\mid \varphi(w)=\boundL\}
$$
be the \textit{bound} of~$\varphi$ on $\cL$, and the \textit{cell recognised} by $\varphi$, respectively.  

The set $\WW_S$ of all weight functions on $S^*$ is an $|S|$-dimensional real vector space (identified with $\RR^{|S|}$ in an obvious way), and the set $\cB(\cL)$ of all weight functions that are bounded on~$\cL$ is a convex cone in~$\WW_S$. Recall that a cone is \textit{polyhedral} if it is the conical hull of finitely many vectors, and \textit{rational} if these vectors may be taken to be rational. Our main theorem is as follows.

\begin{thm1}\label{thm1:cones}
Let $\cL$ be a regular language. 
\begin{compactenum}[$(1)$]
\item The cone $\cB(\cL)$ of weight functions bounded on~$\cL$  is polyhedral and rational.
\item There exists a finite set $\cF\subseteq \cL$ such that $\boundL=\max\{\varphi(w)\mid w\in\cF\}$ for all $\varphi\in\cB(\cL)$. 
\item If $\varphi\in\cB(\cL)$ then the cell $\cellL$ is a nonempty regular language over~$S$. 
\end{compactenum}
\end{thm1}

Less formally, Theorem~\ref{thm1:cones} says that for a regular language, the problems of determining whether a given weight function is bounded, and computing the bound and the cell recognised by a bounded weight function, are ``finite problems''. 

The proof of Theorem~\ref{thm1:cones} is constructive, giving an explicit description of the walls of the cone $\cB(\cL)$, explicitly determining the set $\cF$, and constructing an automaton recognising~$\cellL$. If $\cL$ is not regular, then all conclusions in Theorem~\ref{thm1:cones} may fail (see Examples~\ref{ex:nonpolyhedral}, \ref{ex:nonregular}, and \ref{ex:nonrational}).

A primary motivation for studying weight functions on languages comes from the related notion of weight functions on a finitely generated group~$(G,S)$. A \textit{weight function} on $(G,S)$ is a function $\varphi:G\to \mathbb{R}$ with $\varphi(gh)=\varphi(g)+\varphi(h)$ whenever $\ell(gh)=\ell(g)+\ell(h)$ (with $\ell:G\to\NN$ the natural length function). If $\varphi:G\to\RR$ is a bounded weight function, we write
%
%
$$
\boundG=\sup_{g\in G}\varphi(g)\quad\text{and}\quad \cellG=\{g\in G\mid \varphi(g)=\boundG\}
$$
for the \textit{bound} of $\varphi$ and the \textit{cell recognised} by~$\varphi$.

\restoregeometry
\setlength{\textwidth}{16cm} \setlength{\textheight}{25.5cm}

A combinatorial interpretation of weight functions on groups is as follows. Let $\mathsf{Cay}(G,S)$ be the (directed) Cayley graph of $(G,S)$. A weight function $\varphi$ on $(G,S)$ is equivalent to choosing real labels on each directed edge of $\mathsf{Cay}(G,S)$ such that the labels are invariant under the left action of $G$ on $\mathsf{Cay}(G,S)$, and such that for all $g,h\in G$ all geodesics joining the vertex $h$ to the vertex $g$ have the same weight (where the weight of a geodesic is the sum of the edge weights along the geodesic). A weight function is bounded if and only if there is a global upper bound for the weight of geodesics in $\mathsf{Cay}(G,S)$. 

If there exists a bounded weight function $\varphi$ on $(G,S)$ with $\varphi(s)>0$ for some $s\in S$ then one can deduce numerical properties of reduced expressions in the group. For example, in the triangle group $G=\Delta(2,4,6)$ with Coxeter generating set $S=\{s,t,u\}$ (see Example~\ref{ex:246}) it turns out that the weight function with $\varphi(s)=1$, $\varphi(t)=2$, and $\varphi(u)=-5$ is bounded, with bound~$\boundG=6$. This implies that for any reduced expression $w$ in this group we have 
$
|w_s|+2|w_t|-5|w_u|\leq 6
$
where $|w_s|$ denotes the number of times the generator $s$ appears in $w$, and similarly for $|w_t|$ and $|w_u|$. Moreover, the set of elements for which equality holds is $\cellG=\{stst(utst)^n\mid n\geq 0\}$ (see Corollary~\ref{cor1:groups} below).

A \textit{geodesic language} for $(G,S)$ is a language $\cL\subseteq S^*$ consisting of reduced expressions for elements of~$(G,S)$ such that each element $g\in G$ is represented by at least one word in~$\cL$. Let $\sfp:S^*\to G$ be the natural map. Theorem~\ref{thm1:cones} gives the following corollary.

\begin{cor1}\label{cor1:groups}
Suppose that the finitely generated group $(G,S)$ admits a regular geodesic language~$\cL$. There exist finite sets $X,Y\subseteq G$, depending only on~$\cL$, such that if $\varphi:G\to \RR$ is a weight function then:
\begin{compactenum}[$(1)$]
\item $\varphi$ is bounded if and only if $\varphi(x)\leq 0$ for all $x\in X$;
\item if $\varphi$ is bounded then $\boundG=\max\{\varphi(y)\mid y\in Y\}$;
\item if $\varphi$ is bounded then $\{w\in \cL\mid \sfp(w)\in \cellG\}$ is a nonempty regular language over~$S$. 
\end{compactenum}
\end{cor1}

Another motivation for studying bounded weight functions comes from the representation theory of Hecke algebras with unequal parameters, and connections to Kazhdan-Lusztig theory and conjectures of Bill Casselman on the regularity of Kazhdan-Lusztig cells. We now briefly describe this motivation. Let $(W,S)$ be a Coxeter system, and let $\psi:W\to\mathbb{Z}_{\geq 0}$ be a weight function on $(W,S)$ with $\psi(s)\in\ZZ_{>0}$ for all $s\in S$. The associated \textit{weighted Hecke algebra} is the algebra $H=H(W,S,\psi)$ over $\sR=\ZZ[\sq,\sq^{-1}]$ with basis $\{T_x\mid x\in W\}$ and defining relations
$$
T_xT_s=\begin{cases}T_{xs}&\text{if $\ell(xs)=\ell(x)+1$}\\
T_{xs}+(\sq^{\psi(s)}-\sq^{-\psi(s)})T_x&\text{if $\ell(xs)=\ell(x)-1$}.
\end{cases}
$$
The ``equal parameter case'' is when $\psi=\ell$, and in this case a systematic approach to the representation theory of $H$ is given by Kazhdan and Lusztig  in the landmark paper~\cite{KL:79}. The central notion that arises is that of a \textit{Kazhdan-Lusztig cell}. These cells give a decomposition of the Coxeter group, and for each cell there is naturally associated a representation of the Hecke algebra called a \textit{cell module}. The case of ``unequal parameters'' (where $\psi:W\to\ZZ_{\geq0}$ is a general weight function) was introduced by Lusztig in~\cite{Lus:03} (see also~\cite{Bon:17}). In this case Kazhdan-Lusztig theory is much less developed and many aspects remain conjectural. 

In the 1990s it was conjectured by Bill Casselman that (two-sided) Kazhdan-Lusztig cells in $H$ are regular sets (see \cite{Cas:94,Cas:94b,Gun:10}). This conjecture was initially made for the equal parameter case, however it appears equally plausible in the unequal parameter case. In \cite{Gun:10} Gunnels proves Casselman's conjecture for affine Coxeter groups in the equal parameter case (making essential use of a result of Du~\cite{Du:91} that is not readily available for unequal parameters), and in \cite{BGS:14} Belolipetsky, Gunnels and Scott give conjectural descriptions of Kazhdan-Lusztig cells in hyperbolic planar Coxeter groups that, if true, would imply Casselman's conjecture for this class of groups in equal parameters. 

Recently, an approach to Kazhdan-Lusztig theory in the unequal parameter case has been developed by Guilhot and Parkinson~\cite{GP:19,GP:19b} and Chapelier-Laget, Guilhot, Little and Parkinson~\cite{CGLP:24}, directed towards proving Lusztig's conjectures $\mathsf{P1}$-$\mathsf{P15}$ and constructing Lusztig's asymptotic algebra for affine Coxeter groups. At the heart of this approach is the concept of a \textit{bounded representation} of the Hecke algebra: this is an $H$-module with a fixed choice of basis, such that the matrix entries for the action of the elements $T_x$, $x\in W$, on the module with respect to the fixed basis have globally bounded degree in~$\sq$. If $\pi$ is a bounded representation, the \textit{bound} of $\pi$ is the least upper bound $\bb(\pi)$ of the degree of the entries in the matrices $\pi(T_x)$ with $x\in W$, and the \textit{cell recognised} by the representation is the set $\Gamma(\pi)$ consisting of those $x\in W$ for which the matrix $\pi(T_x)$ attains the degree bound $\bb(\pi)$ in some entry. 

In \cite{GP:19,GP:19b} an explicit family of finite dimensional bounded representations is constructed for affine types $\tilde{\mathsf{G}}_2$ and $\tilde{\mathsf{C}}_2$ (for all choices of parameters) such that each member of the family recognises a different two-sided Kazhdan-Lusztig cell. Similarly, in \cite{CGLP:24} a family of bounded representations is constructed recognising the two-sided Kazhdan-Lusztig cells in type $\tilde{\mathsf{A}}_n$ for $n\geq 1$. In each case, the bound $\bb(\pi)$ turns out to be the value of Lusztig's $\ba$-function on the cell (see \cite[Theorem~2.6]{GP:19} for an explanation of this fact).

In light of the above discussion, and motivated by Casselman's conjecture on regularity of Kazhdan-Lusztig cells, it is natural to ask whether the cell recognised by a bounded representation of the Hecke algebra is necessarily a regular set. 
Theorem~\ref{thm1:cones} resolves this question for bounded $1$-dimensional representations, giving the following Corollary.

\begin{cor1}\label{cor1:reps}
Let $\pi$ be a bounded $1$-dimensional representation of a weighted Hecke algebra. Then the language of reduced expressions for elements in $\Gamma(\pi)$ is a regular language over~$S$. 
\end{cor1}

We now outline the structure of the paper. In Section~\ref{sec:2} we give background on regular languages and prove Theorem~\ref{thm1:cones}. In Section~\ref{sec:3} we study weight functions on groups, proving Corollary~\ref{cor1:groups}. In Section~\ref{sec:4} we specialise to the case of Coxeter groups. We briefly sketch the construction of the minimal automaton recognising the language of lexicographically minimal reduced words in a Coxeter group, and provide examples where Theorem~\ref{thm1:cones} and Corollary~\ref{cor1:groups} can be made completely explicit. Moreover we give explicit formulae for the bound $\boundG$ for spherical Coxeter groups, and for the walls of the cone $\cB(\cL)$ for the language of reduced words in an affine Coxeter group. Finally, in Section~\ref{subsec:5} we prove Corollary~\ref{cor1:reps}.

\section{Proof of Theorem~\ref{thm1:cones}}\label{sec:2}

In this section we prove Theorem~\ref{thm1:cones}. 

\subsection{Languages and automata theory}

We begin with some brief background on languages and automata theory (primary references include \cite{Eps:92} and~\cite{HRR:17}). Let $S$ be a nonempty finite set, and let $S^*$ denote the free monoid over~$S$ consisting of finite strings of elements of~$S$, with $\varnothing$ denoting the empty string. The elements of $S^*$ are called \textit{words}. For $u,v\in S^*$ we write $u\cdot v\in S^*$ for concatenation of words. We say that $v\in S^*$ is a \textit{prefix} of $w\in S^*$, denoted $v\peq w$, if $w=v\cdot v'$ for some $v'\in S^*$. For $w\in S^*$ and $s\in S$ we write $|w_s|$ for the number of times the letter~$s$ appears in the word~$w$. Any subset $\cL\subseteq S^*$ is called a \textit{language} over the \textit{alphabet}~$S$.

An \textit{automaton} is a 5-tuple $\cA=(X,S,o,\tau,F)$ where
\begin{compactenum}[$\bullet$]
\item $X$ is the set of \textit{states};
\item $S$ is a finite set (the \textit{alphabet});
\item $o\in X$ is the \textit{start state};
\item $\tau:X\times S\to 2^X$ is a function (the \textit{transition function});
\item $F\subseteq X$ is the set of \textit{accept states}.
\end{compactenum}
A word $(s_1,\ldots,s_n)\in S^*$ is accepted by $\cA$ if and only if there exists a sequence of states $x_0,\ldots,x_n\in X$ such that $x_0=o$, $x_n\in F$, and $x_{i}\in \tau(x_{i-1},s_i)$ for all $1\leq i\leq n$. The \textit{language recognised by $\cA$} is the set $\mathsf{Acc}(\cA)\subseteq S^*$ of all accepted words. A language $\cL$ over~$S$ is called \textit{regular} if there exists an automaton $\cA=(X,S,o,\tau,F)$ with $|X|<\infty$ recognising $\cL$ (that is, there exists a \textit{finite state} automaton $\cA$ with $\cL=\mathsf{Acc}(\cA)$). 

It is helpful to visualise $\cA$ as a directed graph $\cG(\cA)$ with edges labelled by~$S$, as follows. The vertex set of $\cA$ is taken to be $X$, and for $x,y\in X$ there is a directed edge from $x$ to $y$ with label $s\in S$ if and only if $y\in \tau(x,s)$. Then a word $(s_1,\ldots,s_n)\in S^*$ is accepted if and only if it forms the edge labels on a path in $\cG(\cA)$ from the start state $o$ to an accept state. 

An automaton $\cA=(X,S,o,\tau,F)$ is called \textit{deterministic} if $|\tau(x,s)|\leq 1$ for all $x\in X$ and $s\in S$, and \textit{non-deterministic} otherwise. If $\cA$ is deterministic and $w\in\mathsf{Acc}(\cA)$ then there is a unique path in (the graph of) $\cA$ starting at $o$ with edge labels~$w$. Both deterministic and non-deterministic automata arise in this work, however we note that given a non-deterministic finite state automaton there is a standard procedure to construct a deterministic finite state automaton recognising the same language (see \cite[Proposition~2.5.2]{HRR:17}). 
%
By the Myhill-Nerode Theorem \cite[Theorem~2.5.4]{HRR:17}, given a regular language $\cL$ there is a unique minimal deterministic finite state automaton recognising~$\cL$ (that is, with the fewest states, and unique up to a natural notion of equivalence).

\subsection{Weight functions on a language}

A weight function on $S^*$ is a function $\varphi:S^*\to \RR$ such that $\varphi(u\cdot v)=\varphi(u)+\varphi(v)$ for all $u,v\in S^*$ (in particular $\varphi(\varnothing)=0$). It is clear that a weight function $\varphi$ on $S^*$ is completely determined by the real numbers $a_s=\varphi(s)$, $s\in S$, and conversely every $|S|$-tuple of real numbers $(a_s)_{s\in S}$ gives rise to a weight function with $\varphi(s)=a_s$. Thus the set $\WW_S$ of all weight functions on $S^*$ is an $|S|$-dimensional real vector space. We identify $\WW_S$ with $\RR^{|S|}$ in the obvious way, via $\varphi_s\leftrightarrow e_s$ for $s\in S$, were $\varphi_s(s)=1$ and $\varphi_s(t)=0$ for $t\in S\backslash\{s\}$.

%
%
%

Let $\cL$ be a language over the alphabet~$S$. A weight function $\varphi:S^*\to \RR$ is \textit{bounded on $\cL$} if there exists $N>0$ such that $\varphi(w)\leq N$ for all $w\in\cL$. Let $\cB(\cL)\subseteq\WW_S$ denote the set of all weight functions bounded on~$\cL$. If $\varphi,\varphi'\in\cB(\cL)$ then $\lambda\varphi+\lambda'\varphi'\in\cB(\cL)$ for all $\lambda,\lambda'\geq 0$, and so $\cB(\cL)$ is a convex cone in $\WW_S$. For $\varphi\in\cB(\cL)$ we define the \textit{bound} $\boundL$ of $\varphi$ and the \textit{cell} $\cellL$ recognised by $\varphi$ as in the introduction.

Before proving Theorem~\ref{thm1:cones} we give some examples of (necessarily non-regular) languages where the conclusion of Theorem~\ref{thm1:cones} fails. In particular, these examples demonstrate the following possibilities. Firstly, the cone $\cB(\cL)$ may be either non-polyhedral, or polyhedral yet non-rational. Secondly, the cell recognised by a bounded weight function may be a non-regular language. Thirdly, the bound of a bounded weight function may not be attained (and so the cell is empty). 

\begin{example}\label{ex:nonpolyhedral}
Let $S=\{s,t,u\}$ and let $\cL$ be the language over $S$ given by 
$$
\cL=\{s^{\lceil n\cos\theta\rceil}t^{\lceil n\sin\theta\rceil}u^n\mid n\in\mathbb{N},\,\theta\in[0,\pi/2]\}.
$$
Identifying $\varphi\in\WW_S$ with $(\varphi(s),\varphi(t),\varphi(u))\in\mathbb{R}^3$ we have 
$$
\cB(\cL)=\{(x,y,z)\in\mathbb{R}^3\mid \cos\theta\,x+\sin\theta\,y+z\leq 0\text{ for all $\theta\in[0,\pi/2]$}\},
$$
a non-polyhedral cone in $\mathbb{R}^3$. 
\end{example}

\begin{example}\label{ex:nonregular}
Let $S=\{s,t\}$ and let $\cL=\{s^nt^n\mid n\in\mathbb{N}\}$. The weight function $\varphi$ with $\varphi(s)=1$ and $\varphi(t)=-1$ is bounded on $\cL$, and $\cellL=\cL$, a non-regular language (for example, by the Pumping Lemma).
\end{example}

\begin{example}\label{ex:nonrational}
Let $S=\{s,t\}$ and let $\cL$ be the language over $S$ given by 
$$
\cL=\{w\in S^*\mid |w_s|-\sqrt{2}|w_t|<\sqrt{3}\}.
$$
Identifying $\varphi\in\WW_S$ with $(\varphi(s),\varphi(t))\in\mathbb{R}^2$, the cone $\cB(\cL)$ is the conical hull of the vectors $(1,-\sqrt{2})$ and $(-1,0)$, and hence is polyhedral yet non-rational. Moreover, the weight function $\varphi:S^*\to\RR$ with $\varphi(w)=|w_s|-\sqrt{2}|w_t|$ is bounded on~$\cL$, and since $\mathbb{N}-\mathbb{N}\sqrt{2}$ is dense in $\mathbb{R}$ we have $\boundL=\sqrt{3}$, yet $\varphi(w)\neq \sqrt{3}$ for any $w\in\cL$, giving $\cellL=\varnothing$. 
\end{example}

\subsection{Proof of Theorem~\ref{thm1:cones}}\label{sec:proof}

Suppose that $\cL$ is a regular language over an alphabet~$S$, and fix a deterministic finite state automaton~$\cA=(X,S,o,\tau,F)$ recognising~$\cL$. If $v\in S^*$ is a prefix of some element $w\in\cL$ then there exists a unique path $\pa(v)$ in $\cA$ starting at $o$ with edge labels~$v$, and we let $\st(v)\in X$ denote the end state of this path (note that $v\in\cL$ if and only if $\st(v)\in F$).

\begin{defn}
Let $\cL$ be a regular language recognised by the deterministic finite state automaton~$\cA$ as above. Let $w=(s_1,\ldots,s_n)\in \cL$ and let $w_i=(s_1,\ldots,s_i)\in S^*$ for $1\leq i\leq n$. 
\begin{compactenum}[$(1)$]
\item The word $w\in\cL$ is \textit{circuit free} if $\st(w_i)\neq\st(w_j)$ whenever $1\leq i<j\leq n$. Let $\CircFree(\cL)\subseteq\cL$ be the set of circuit free elements of $\cL$, and let $\PrefixCircFree(\cL)\subseteq S^*$ be the set of all prefixes of circuit free words. 
\item If $1\leq i<j\leq n$ are such that $\st(w_i)=\st(w_j)$ then the word $v=(s_{i+1},\ldots,s_j)\in S^*$ is called a \textit{circuit subword} of~$w$. Moreover if $\st(w_k)\neq \st(w_l)$ for all $i<k<l<j$ then $v$ is called a \textit{simple circuit subword} of~$w$. Let $\CircWord(\cL)\subseteq S^*$ denote the set of all simple circuit subwords, for all $w\in\cL$. 
\end{compactenum}
\end{defn}

Note that both $\CircWord(\cL)$ and $\CircFree(\cL)$ depend on the particular choice of automaton~$\cA$ recognising~$\cL$, however since we fix a choice of $\cA$ throughout, we omit this dependence from the notation. 

\begin{remark}\label{rem:terminology} The reason for the terminology is as follows. A word $w\in\cL$ is circuit free if and only if the path $p=\pa(w)$ in $\cA$ visits no state more than once (that is, it contains no circuits). On the other hand, if $v$ is a circuit subword of~$w$ then the part of the path $\pa(w)$ corresponding to the subword $v$ is a circuit based at the state $\st(w_i)$, and if $v$ is a simple circuit subword then this circuit is a simple circuit. 
\end{remark}

\begin{prop}\label{prop:finitesets}
The sets $\CircWord(\cL)$ and $\CircFree(\cL)$ are finite. 
\end{prop}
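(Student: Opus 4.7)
The plan is a direct pigeonhole argument: I will show that both sets consist of words whose length is uniformly bounded in terms of $|X|$, and then invoke finiteness of $S$.

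First I would handle $\CircFree(\cL)$. Suppose $w = (s_1, \ldots, s_n) \in \CircFree(\cL)$, and consider the states $\st(w_1), \st(w_2), \ldots, \st(w_n)$ reached along the unique path $\pa(w)$ in $\cA$. By definition of circuit freeness these $n$ states are pairwise distinct elements of the finite set $X$, so $n \leq |X|$. Hence every word in $\CircFree(\cL)$ has length at most $|X|$, and since $S$ is finite there are only finitely many such words.

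For $\CircWord(\cL)$, let $v = (s_{i+1}, \ldots, s_j)$ be a simple circuit subword of some $w = (s_1, \ldots, s_n) \in \cL$, so $\st(w_i) = \st(w_j)$ and the states $\st(w_k)$ for $i < k < j$ are pairwise distinct. These intermediate states form $j - i - 1$ distinct elements of $X$, which forces $j - i - 1 \leq |X|$, i.e. the length of $v$ is bounded by $|X| + 1$. Again, finiteness of $S$ implies that only finitely many such words exist, so $\CircWord(\cL)$ is finite.

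I do not anticipate any real obstacle: the argument is essentially pigeonhole, and the only subtlety is to be careful which subset of states one counts (the non-empty proper prefixes in the circuit-free case, and the strictly intermediate states in the simple-circuit case). In both cases the bound is a simple function of $|X|$, and the proposition follows immediately from $|S| < \infty$.
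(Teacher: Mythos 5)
Your proof is correct and follows essentially the same route as the paper's: the paper argues that a finite automaton has only finitely many simple circuits and finitely many paths visiting no state twice, which is exactly your pigeonhole bound on lengths made slightly less explicit. The length bounds $n\leq|X|$ and $j-i\leq|X|+1$ are both right, so nothing further is needed.
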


\begin{proof}
If $v\in \CircWord(\cL)$ then $v$ is the sequence of labels on a simple circuit in $\cA$ based at some state~$x$ (see Remark~\ref{rem:terminology}). Since $\cA$ has finitely many states, the number of simple circuits in~$\cA$ is finite, and hence $\CircWord(\cL)\subseteq S^*$ is finite. 

If $w\in \CircFree(\cL)$ then the path $\pa(w)$ visits no state of $\cA$ more than once, and $w$ is the sequence of edge labels on this path. Since $\cA$ is finite there are finitely many paths visiting no state more than once, and hence $\CircFree(\cL)$ is finite. 
\end{proof}

\begin{lemma}\label{lem:reducetocircfree}
Let $w\in\cL$. If $w$ is not circuit free there is $x\in \PrefixCircFree(\cL)$, $y\in S^*$ and $v\in\CircWord(\cL)$ with $w=x\cdot v\cdot y$ such that $\st(x\cdot v)=\st(x)$ and $\st(x\cdot v')\neq \st(x)$ for any prefix $v'$ of $v$ with $v'\notin\{ \varnothing,v\}$. Moreover, the word $w'=x\cdot y$ is in $\cL$ with $\st(w')=\st(w)$. 
\end{lemma}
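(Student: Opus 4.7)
I would extract the first repetition in the state sequence of $\pa(w)$. Write $w=(s_1,\ldots,s_n)$ and $w_k=(s_1,\ldots,s_k)$ for $0\leq k\leq n$, so that $\pa(w)$ visits the states $\st(w_0)=o,\st(w_1),\ldots,\st(w_n)$. Since $w\in\cL$ is not circuit free, there exist $1\leq i<j\leq n$ with $\st(w_i)=\st(w_j)$; I would pick such a $j$ to be minimal. By this minimality the states $\st(w_1),\ldots,\st(w_{j-1})$ are pairwise distinct, so $i\in\{1,\ldots,j-1\}$ is uniquely determined. Set $x=w_i$, $v=(s_{i+1},\ldots,s_j)$ and $y=(s_{j+1},\ldots,s_n)$, so that $w=x\cdot v\cdot y$ by construction.

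The structural requirements then follow directly from this distinctness. Since $\st(w_1),\ldots,\st(w_i)$ are pairwise distinct, the word $x$ is circuit free and hence $x\in\PrefixCircFree(\cL)$. The internal states $\st(w_{i+1}),\ldots,\st(w_{j-1})$ of the circuit are likewise pairwise distinct, so $v$ is a simple circuit subword of $w\in\cL$ itself; in particular $v\in\CircWord(\cL)$. The identity $\st(x\cdot v)=\st(w_j)=\st(w_i)=\st(x)$ is immediate, and for any proper nonempty prefix $v'=(s_{i+1},\ldots,s_k)$ of $v$ (so $i<k<j$), the same distinctness gives $\st(x\cdot v')=\st(w_k)\neq\st(w_i)=\st(x)$.

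Finally I would confirm that $w'=x\cdot y\in\cL$ and $\st(w')=\st(w)$ by appealing to the determinism of $\cA$: the unique path starting at $o$ with labels $x$ ends at $\st(x)=\st(w_j)$, and reading the remaining letters $y=(s_{j+1},\ldots,s_n)$ from this state traces exactly the same sequence of transitions as the tail of $\pa(w)$, so the path terminates at $\st(w_n)\in F$. The main point to get right is the single minimality argument on $j$, which must be invoked in parallel to deliver the circuit freeness of $x$, the simplicity of the circuit $v$, and the non-return property of its proper nonempty prefixes; once this is in place, no further induction or iterated reduction is needed.
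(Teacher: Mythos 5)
Your proof is correct and takes essentially the same approach as the paper: locate the first repetition of a state along $\pa(w)$, splice out the resulting simple circuit, and use determinism of $\cA$ to see that the shortened word is still accepted with the same end state. Your choice to minimise $j$ (where the paper instead picks a simple circuit subword with $i$ minimal) is a minor variant that, if anything, makes the circuit-freeness of $x$ and the non-return property of proper prefixes of $v$ slightly more transparent.
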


\begin{proof}
Write $w=(s_1,\ldots,s_n)$ and let $w_i=(s_1,\ldots,s_i)$ for $1\leq i\leq n$. If $w$ is not circuit free then there is $i,j$ with $1\leq i<j\leq n$ such that $\st(w_{i})=\st(w_{j})$ with $\st(w_k)\neq \st(w_l)$ for all $i<k<l<j$, and taking $i$ minimal we have that $w_{i}$ is circuit free. Then $w=w_i\cdot v\cdot y$ where $v=(s_{i+1},\ldots,s_{j})$ is a simple circuit subword of $w$ and $y=(s_{j+1},\ldots,s_n)$. Since $\st(w_{i})=\st(w_{j})$ the word $w'=x\cdot y$ is the sequence of edge labels on a path in $\cA$ starting at $o$ and ending at $\st(w)$. Since $\st(w)$ is an accept state (as $w\in\cL$) we have $w'\in \cL$ with $\st(w')=\st(w)$. 
%
%
%
%
%
\end{proof}
%

Theorem~\ref{thm1:cones} will follow easily from Theorem~\ref{thm:bounded} below.

\begin{thm}\label{thm:bounded}
Let $\cL$ be a regular language over the alphabet~$S$ and let $\varphi\in\WW_S$. 
\begin{compactenum}[$(1)$]
\item We have $\varphi\in\cB(\cL)$ if and only if $\varphi(v)\leq 0$ for all $v\in\CircWord(\cL)$.
\item If $\varphi(v)< 0$ for all $v\in\CircWord(\cL)$ then $\cellL\subseteq \CircFree(\cL)$ is finite. 
\item If $\varphi\in\cB(\cL)$ then $\boundL=\max\{\varphi(w) \mid w\in \CircFree(\cL)\}$.
\item If $\varphi\in\cB(\cL)$ then the cell $\cellL$ recognised by $\varphi$ is a regular language over~$S$.
\end{compactenum}
\end{thm}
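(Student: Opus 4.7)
The plan is to deduce (1)--(3) directly from Lemma~\ref{lem:reducetocircfree}, and to prove (4) by constructing an explicit deterministic finite state automaton recognising $\cellL$.

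For (1), the forward direction follows by pumping: if $\varphi\in\cB(\cL)$ and $v\in\CircWord(\cL)$ is a simple circuit subword of $w=x\cdot v\cdot y\in\cL$, then $x\cdot v^n\cdot y\in\cL$ for every $n\geq 0$ (inserting copies of the circuit still yields a valid path from $o$ to an accept state), with weight $\varphi(x)+n\varphi(v)+\varphi(y)$, so boundedness forces $\varphi(v)\leq 0$. For the converse, I will iterate Lemma~\ref{lem:reducetocircfree} to strip off simple circuit subwords $v_1,\ldots,v_r$ from any $w\in\cL$ until reaching a circuit-free $w^{\flat}\in\CircFree(\cL)$, giving $\varphi(w)=\varphi(w^{\flat})+\sum_i\varphi(v_i)\leq\varphi(w^{\flat})$; since $\CircFree(\cL)$ is finite by Proposition~\ref{prop:finitesets}, this bounds $\varphi$ on $\cL$ and also shows $\boundL=\max\{\varphi(w_0):w_0\in\CircFree(\cL)\}$, proving (3). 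For (2), strict negativity of each $\varphi(v_i)$ upgrades the inequality to $\varphi(w)<\varphi(w^{\flat})\leq\boundL$ for any $w\notin\CircFree(\cL)$, placing $\cellL\subseteq\CircFree(\cL)$, which is then finite.

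For (4), the idea is to build an automaton on essentially the state set of $\cA$. After trimming $\cA$ to states reachable from $o$ and co-reachable to $F$ (which does not change the recognised language), I will define for each $x\in X$
\[
m(x)=\max\{\varphi(u):u\in S^*,\ \st(u)=x,\ u\text{ a prefix of some }w\in\cL\}.
\]
The stripping argument of the previous paragraph bounds $\varphi(u)$ by the weight of a simple path from $o$ to $x$, so $m(x)$ is a finite maximum attained by some such simple path. Appending the letter $s$ to a prefix realising $m(x)$ gives $m(\tau(x,s))\geq m(x)+\varphi(s)$, so the edge increment
\[
\delta(x,s):=m(\tau(x,s))-m(x)-\varphi(s)
\]
is nonnegative whenever $\tau(x,s)$ is defined. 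Setting the deficit $d(u):=m(\st(u))-\varphi(u)\geq 0$, a short computation gives $d(u\cdot s)=d(u)+\delta(\st(u),s)$, so $d$ is monotone non-decreasing along prefixes.

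One then checks that $\boundL=\max_{x\in F}m(x)$ (the $\leq$ direction is $\varphi(w)\leq m(\st(w))$; the $\geq$ direction uses that a prefix attaining $m(x)$ at an accept state $x\in F$ is itself in $\cL$). Letting $F^{*}=\{x\in F:m(x)=\boundL\}$, a word $w\in\cL$ lies in $\cellL$ iff $\st(w)\in F^{*}$ and $d(w)=0$, and by monotonicity of $d$ the latter is equivalent to the local condition $\delta(\st(w_{i-1}),s_i)=0$ at every transition of $w$. Consequently $\cellL$ is recognised by the deterministic finite state automaton $\cA^{*}=(X,S,o,\tau^{*},F^{*})$ with $\tau^{*}(x,s)=\tau(x,s)$ when $\delta(x,s)=0$ and $\tau^{*}(x,s)=\varnothing$ otherwise, so $\cellL$ is regular. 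The main obstacle will be identifying the deficit $d(u)$ and verifying that it is nonnegative and non-decreasing, since this is what converts the global bound $\varphi(w)=\boundL$ into a local edge condition enforceable without tracking the real-valued quantity $\varphi(u)$ with unboundedly many states.
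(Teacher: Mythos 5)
Your treatment of parts (1)--(3) is essentially the paper's own argument: pumping a simple circuit subword for the forward direction of (1), iterating Lemma~\ref{lem:reducetocircfree} to strip circuits for the converse, and reading off (2) and (3) from the finiteness of $\CircFree(\cL)$. Part (4) is where you genuinely diverge, and your route is correct. The paper builds a new, typically non-deterministic automaton $\cA_{\varphi}$ whose states are the prefixes of circuit-free words with the simple $\varphi$-circuits of $\cA$ glued on, and then proves $\mathsf{Acc}(\cA_{\varphi})=\cellL$ by decomposing accepted paths into a circuit-free skeleton together with bouquets of zero-weight circuits. You instead keep the original DFA, introduce the potential $m(x)$ (finite and attained on a circuit-free path precisely because boundedness forces every circuit to have non-positive weight), and observe that the deficit $d(u)=m(\st(u))-\varphi(u)$ is non-negative and accumulates the non-negative edge increments $\delta$; hence $\varphi(w)=\boundL$ becomes the purely local condition that every edge used has $\delta=0$ and the final state maximises $m$ over $F$. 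This yields a \emph{deterministic} automaton on the same state set as $\cA$, which is smaller and cleaner than the paper's construction (and $m$ is effectively computable as a longest-path problem in a graph with no positive cycles); the paper's construction, in exchange, exhibits the cell structurally as circuit-free words decorated by zero-weight simple circuits, which is what its later examples exploit. Two points you leave implicit but which are easily supplied: the trimming to reachable and co-accessible states is exactly what guarantees that every simple circuit of $\cA$ can be pumped inside $\cL$, so that the stripping argument applies to arbitrary prefixes and not just to words of $\cL$; and you need $m(o)=0$, equivalently $d(\varnothing)=0$ (true, since any closed walk at $o$ decomposes into circuits of non-positive weight), in order for ``$d(w)=0$'' to be equivalent to ``every increment $\delta$ along $w$ vanishes.''
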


\begin{proof}
Let $\cA$ be the fixed choice of deterministic finite state automata recognising~$\cL$, as above. 

(1) Suppose that $\varphi$ is bounded, and let $v\in \CircWord(\cL)$. Thus there is $w=(s_1,\ldots,s_n)\in\cL$ with $v=(s_{i+1},\ldots,s_j)$ such that $\st(w_i)=\st(w_j)$, where $w_i=(s_1,\ldots,s_i)$. Since $\st(w_i)=\st(w_j)$ the word $y=w_i\cdot v^N\cdot v'$, where $v'=(s_{j+1},\ldots,s_n)$ and $N\geq 0$, is the sequence of edge labels on a path in~$\cA$ starting at $o$ and ending at $\st(w)$, and hence is an element of~$\cL$. Since $\varphi(y)=\varphi(w_i)+N\varphi(v)+\varphi(v')$ boundedness forces $\varphi(v)\leq 0$. 

Conversely, suppose that $\varphi(v)\leq 0$ for all $v\in\CircWord(\cL)$. Let $w\in\cL$. If $w$ is not circuit free then by Lemma~\ref{lem:reducetocircfree} there is $x\in\PrefixCircFree(\cL)$, $y\in S^*$, and $v\in\CircWord(\cL)$ such that $w=x\cdot v\cdot y$ and $w'=x\cdot y\in\cL$. Then 
$$
\varphi(w)=\varphi(x)+\varphi(v)+\varphi(y)\leq \varphi(x)+\varphi(y)=\varphi(w').
$$
Repeating this process (using $w'$ in place of $w$) we eventually obtain $w_{\mathsf{cf}}\in\CircFree(\cL)$ with $\varphi(w)\leq \varphi(w_{\mathsf{cf}})$. Since $\CircFree(\cL)$ is a finite set (see Proposition~\ref{prop:finitesets}) it follows that $\varphi$ is bounded, hence (1). Moreover, if $\varphi(v)< 0$ for all $v\in\CircWord(\cL)$ and $w\notin\CircFree(\cL)$ then the inequality above gives $\varphi(w)<\varphi(w_{\mathsf{cf}})$ and so $w\notin\cellL$, hence (2).

(3) Suppose that $\varphi$ is bounded. Then by (1) we have $\varphi(v)\leq 0$ for all $v\in\CircWord(\cL)$, and the argument in the previous paragraph shows that for each $w\in\cL$ we have $\varphi(w)\leq \varphi(w_{\mathsf{cf}})$ for some $w_{\mathsf{cf}}\in\CircFree(\cL)$, and so the bound $\boundL$ is attained at an element of $\CircFree(\cL)$.

(4) Let $\varphi$ be bounded. A simple circuit $p$ in $\cA$ is called a \textit{simple $\varphi$-circuit} if the sequence of edge labels $w\in S^*$ on $p$ (defined up to cyclic shifts) satisfies $\varphi(w)=0$. 

We construct a graph $\cG_{\varphi}$ as follows (see Example~\ref{ex:infinitedihedral} for an explicit example). To begin with, each element $v$ of $\PrefixCircFree(\cL)$ is a vertex of $\cG_{\varphi}$, and there is an $s$-labelled edge from $v$ to $v'$ if and only if $v'=v\cdot s$. We add new vertices as follows. For each $v\in\PrefixCircFree(\cL)$ and each simple $\varphi$-circuit $p$ in~$\cA$ based at $\st(v)$, we append to $v$ a corresponding simple circuit of new vertices with the same edge labels as $p$. The resulting directed $S$-labelled graph $\cG_{\varphi}$ is considered as a (typically non-deterministic) automaton $\cA_{\varphi}$ with the vertex set $X_{\varphi}$ of $\cG_{\varphi}$ being the set of states, $\varnothing$ the start state, transition function given by the directed edges, and accept states $$F_{\varphi}=\{w\in \CircFree(\cL)\mid \varphi(w)=\boundL\}.$$
Since $\cA$ has finitely many simple circuits, and since $\CircFree(\cL)$ is finite, the automaton $\cA_{\varphi}$ has finitely many states.

We claim that $\mathsf{Acc}(\cA_{\varphi})=\cellL$. Suppose that $w\in \mathsf{Acc}(\cA_{\varphi})$. We must show that $w\in\cL$ and $\varphi(w)=\boundL$. Since $w$ is accepted by $\cA_{\varphi}$ there is a path $p$ in $\cA_{\varphi}$ starting at $\varnothing$ with edge labels~$w$, and the final state of this path is an element $z\in\CircFree(\cL)$ with $\varphi(z)=\boundL$. By construction of $\cA_{\varphi}$ this path decomposes as $p=p_1\cdot c_1\cdot p_2\cdot c_2\cdots  p_n\cdot c_n$ where:
\begin{compactenum}[$(a)$]
\item $p_1,p_2,\ldots,p_n$ are paths in the subgraph of $\cG_{\varphi}$ with vertex set $\PrefixCircFree(\cL)$ with the start vertex of $p_{i+1}$ being the end vertex of $p_i$ for all $1\leq i\leq n-1$, and
\item  $c_1,c_2,\ldots,c_n$ are bouquets of simple circuits with $c_i$ based at the end vertex $x_i$ of $p_1\cdot p_2\cdots p_i$ for each $1\leq i\leq n$ such that the only vertex of $c_i$ in $\PrefixCircFree(\cL)$ is~$x_i$. 
\end{compactenum}
This is illustrated below. 
\begin{figure}[H]
\centering
\begin{tikzpicture} [xscale=1.2,yscale=1.2]
\node at (0,0) (1) {$\bullet$};
\node at (2,0) (2) {$\bullet$};
\node at (4,0) (3) {$\bullet$};
\node at (6,0) (4) {$\bullet$};
\node at (8,0) (5) {$\bullet$};
\node at (10,0) (6) {$\bullet$};
\node at (1,-0.25) {$p_1$};
\node at (3,-0.25) {$p_2$};
\node at (5,-0.25) {$p_3$};
\node at (9,-0.25) {$p_n$};
\node at (10,-0.3) {$z$};
\node at (0,-0.3) {$\varnothing$};
\node at (2,0.8) {$c_1$};
\node at (4,0.8) {$c_2$};
\node at (6,0.8) {$c_3$};
\node at (8,0.8) {$c_{n-1}$};
\node at (10,0.8) {$c_n$};
\draw[-latex] (1)--(2);
\draw[-latex] (2)--(3);
\draw[-latex] (3)--(4);
\draw[-latex,dotted] (4)--(5);
\draw[-latex] (5)--(6);
\draw[-latex] (2) edge[in=90-30,out=90+30,loop] ();
\draw[-latex] (3) edge[in=90+20,out=90+60,loop] ();
\draw[-latex] (3) edge[in=90-60,out=90-20,loop] ();
\draw[-latex] (4) edge[in=90-30,out=90+30,loop] ();
\draw[-latex] (5) edge[in=90+40,out=90+70,loop] ();
\draw[-latex] (5) edge[in=90-20,out=90+20,loop] ();
\draw[-latex] (5) edge[in=90-70,out=90-40,loop] ();
\draw[-latex] (6) edge[in=90-30,out=90+30,loop] ();
\end{tikzpicture}
\end{figure}
For each $i$ let $u_i\in S^*$ be the sequence of labels on $p_i$, and let $v_i\in S^*$ be the sequence of labels on $c_i$. By construction of $\cA_{\varphi}$ we have $\varphi(v_i)=0$ for all~$i$. The path $p_1\cdot p_2\cdots p_n$ is a path in the subgraph $\PrefixCircFree(\cL)$ with edge labels $u_1\cdot u_2\cdots u_n$, and so $z=u_1\cdot u_2\cdots u_n$. Then $\varphi(w)=\sum_i (\varphi(u_i)+\varphi(v_i))=\sum_i\varphi(u_i)=\varphi(z)=\boundL$. 

We now show that $w$ is accepted by $\cA$. Since $z\in\CircFree(\cL)$ in particular we have $z\in \cL$, and hence $z$ is accepted by $\cA$. Thus there is a path $q_0$ in $\cA$ with edge labels~$z=u_1\cdot u_2\cdots u_n$. Assume for the moment that $c_1$ is a single circuit (as illustrated above). By construction of $\cA_{\varphi}$ there is a simple $\varphi$-circuit with labels $v_1$ based at the state $\st(u_1)$ of $\cA$. If $c_1$ is a bouquet of simple circuits (as illustrated for $c_2$ above) then there is a bouquet of simple $\varphi$-circuits based at $\st(u_1)$ with combined edge labels~$v_1$. Thus there is a path $q_1$ in $\cA$ with edge labels $u_1\cdot v_1\cdot u_2\cdot u_3\cdots u_n$, and continuing inductively we obtain a path $q_n$ in $\cA$ with edge labels $u_1\cdot v_1\cdot u_2\cdot v_2\cdots u_n\cdot v_n=w$, and so $w$ is accepted by~$\cA$.

Conversely, suppose that $w\in\cellL$. Write $w=x\cdot v\cdot y$ with $x\in \PrefixCircFree(\cL)$, $y\in S^*$, and $v\in\CircWord(\cL)$, and let $w'=x\cdot y$ as in Lemma~\ref{lem:reducetocircfree}. Since $\varphi$ is bounded part (1) of the theorem gives $\varphi(v)\leq 0$ and since $w'\in\cL$ we have $\varphi(w')\leq \boundL$, and so 
$$
\boundL=\varphi(w)=\varphi(x)+\varphi(v)+\varphi(y)\leq \varphi(x)+\varphi(y)= \varphi(w')\leq \boundL.
$$
Thus $\varphi(v)=0$ and $w'\in \cellL$. We claim that if $w'$ is accepted by $\cA_{\varphi}$ then so too is $w$. To see this, note that since $x\in \PrefixCircFree(\cL)$ there is a path in $\cG_{\varphi}$ from $\varnothing$ to $x$ remaining in the subset $\PrefixCircFree(\cL)$ of the vertex set. Since $\st(x\cdot v)=\st(x)$ and $\st(x\cdot v')\neq\st(x)$ for all prefixes $v'\neq \varnothing,v$ of $v$ there is a simple circuit in $\cA$ based at $\st(x)$ with edge labels~$v$. Thus since $\varphi(v)=0$ there is, by construction, a simple circuit in $\cA_{\varphi}$ with edge labels $v$ based at the state~$x$. Assuming that $w'$ is accepted by $\cA_{\varphi}$, there is also a path in $\cA_{\varphi}$ starting at $x$ with edge labels~$y$, and thus there is a path in $\cA_{\varphi}$ with edge labels $x\cdot v\cdot y$, hence the claim. 

Continuing the process (replacing $w$ with $w'$ and decomposing $w'=x'\cdot v'\cdot y'$) we eventually arrive at a circuit free element $w_{\mathsf{cf}}\in\cL$ with $\varphi(w_{\mathsf{cf}})=\boundL$. Since this element is accepted by~$\cA_{\varphi}$, the inductive argument of the previous paragraph gives that~$w$ is also accepted by $\cA_{\varphi}$, completing the proof. 
\end{proof}
%
%
%
%

We now give the proof of Theorem~\ref{thm1:cones}.

\begin{proof}[Proof of Theorem~\ref{thm1:cones}]
(1) By Theorem~\ref{thm:bounded}(1) we have
$$
\cB(\cL)=\{\varphi\in\WW_S\mid \varphi(v)\leq 0\text{ for all $v\in \CircWord(\cL)$}\},
$$
and by Proposition~\ref{prop:finitesets} the set $\CircWord(\cL)$ is finite. We have $\varphi(v)=\sum_{s\in S}|v_s|\varphi(s)$, where $|v_s|$ denotes the number of times the letter~$s$ appears in $v$, and so $\cB(\cL)$ is described by finitely many inequalities of the form $\sum_{s\in S}a_s\varphi(s)\leq 0$, where $a_s\in\mathbb{Z}_{\geq 0}$. Thus by the Minkowski-Weyl Theorem (see \cite[Theorem~1.3]{Zie:98}) $\cB(\cL)$ is a rational polyhedral cone. 

(2) This follows from Theorem~\ref{thm:bounded}(3), noting that $\CircFree(\cL)$ is finite by Proposition~\ref{prop:finitesets}. 

(3) This is Theorem~\ref{thm:bounded}(4). 
\end{proof}

\begin{example}\label{ex:infinitedihedral}
To illustrate Theorem~\ref{thm:bounded} (and hence Theorem~\ref{thm1:cones}) in a simple example, let $S=\{s,t\}$ and let $\cL$ be the language accepted by the following automata~$\cA$ (with start state $0$, all states accept states, and black, red arrows indicating $s$, $t$ transitions respectively).
\begin{figure}[H]
\centering
\begin{tikzpicture} [xscale=1.4,yscale=1.4]
\node[shape=circle,draw,rounded corners,fill=lightgray] at (0,0) (e) {$0$};
\node[shape=circle,draw,rounded corners,fill=lightgray] at (-1,0) (t) {$2$};
\node[shape=circle,draw,rounded corners,fill=lightgray] at (1,0) (s) {$1$};
\draw[-latex,thick] (e)--(s);
\draw[-latex,red,thick] (e)--(t);
\draw[-latex,thick,red] (s) to[out=135,in=45] (t);
\draw[-latex,thick] (t) to[out=-45,in=-135] (s);
\end{tikzpicture}
\end{figure}
\noindent Then $\CircFree(\cL)=\{\varnothing,(s),(t),(s,t),(t,s)\}$ and $\CircWord(\cL)=\{(s,t),(t,s)\}$. Let $\varphi$ be the weight function with $\varphi(s)=a$ and $\varphi(t)=b$ for $a,b\in\RR$. By Theorem~\ref{thm:bounded} we have that $\varphi$ is bounded if and only if $a+b\leq 0$, and if $\varphi$ is bounded then 
$$
\boundL=\max\{\varphi(w)\mid w\in\CircFree(\cL)\}=\max\{0,a,b,a+b\}.
$$
To illustrate the construction of the automaton $\cA_{\varphi}$ from the proof of Theorem~\ref{thm:bounded}, consider the case $a>0$ with $a+b=0$. The automaton $\cA_{\varphi}$ is illustrated on the left in Figure~\ref{fig:A1} (the start state is~$\varnothing$, and there is only one accept state $s$, shaded grey). We have $\PrefixCircFree(\cL)= \CircFree(\cL)$, and the nodes labelled $\varnothing,s,t,st,ts$ correspond to these elements. The nodes with $\bullet$ are those circuits added corresponding to the simple $\varphi$-circuits in $\cA$. 
\begin{figure}[H]
\centering
\begin{tikzpicture} [xscale=1.4,yscale=1.4]
\node[shape=circle,draw,rounded corners] at (0,0) (e) {\small{$\varnothing$}};
\node[shape=circle,draw,rounded corners] at (-1,0) (t) {$t$};
\node[shape=circle,draw,rounded corners] at (-2,0) (ts) {$ts$};
\node[shape=circle,draw,rounded corners,fill=lightgray] at (1,0) (s) {$s$};
\node[shape=circle,draw,rounded corners] at (2,0) (st) {$st$};
\node[shape=circle,draw,rounded corners] at (-2,1) (newts) {$\bullet$};
\node[shape=circle,draw,rounded corners] at (-1,1) (newt) {$\bullet$};
\node[shape=circle,draw,rounded corners] at (1,1) (news) {$\bullet$};
\node[shape=circle,draw,rounded corners] at (2,1) (newst) {$\bullet$};
\draw[-latex,thick] (e)--(s);
\draw[-latex,red,thick] (s)--(st);
\draw[-latex,red,thick] (e)--(t);
\draw[-latex,thick] (t)--(ts);
\draw[-latex,red,thick] (ts) to[out=-225,in=-135] (newts);
\draw[-latex,thick] (newts) to[out=-45,in=45] (ts);
\draw[-latex,thick] (t) to[out=135,in=225] (newt);
\draw[-latex,red,thick] (newt) to[out=-45,in=45] (t);
\draw[-latex,thick] (st) to[out=-225,in=-135] (newst);
\draw[-latex,red,thick] (newst) to[out=-45,in=45] (st);
\draw[-latex,red,thick] (s) to[out=135,in=225] (news);
\draw[-latex,thick] (news) to[out=-45,in=45] (s);
\end{tikzpicture}\qquad\quad\qquad 
\begin{tikzpicture} [xscale=1.4,yscale=1.4]
\node at (0,-0.7) {};
\node[shape=circle,draw,rounded corners] at (0,0) (e) {\small{$\varnothing$}};
\node[shape=circle,draw,rounded corners,fill=lightgray] at (1,0) (s) {$s$};
\node[shape=circle,draw,rounded corners] at (2,0) (news) {$\bullet$};
\draw[-latex,thick] (e)--(s);
\draw[-latex,red,thick] (s) to[out=-45,in=225] (news);
\draw[-latex,thick] (news) to[out=90+45,in=45] (s);
\end{tikzpicture}
\caption{Automata recognising $\cellL$}\label{fig:A1}
\end{figure}
\noindent There is some redundancy in $\cA_{\varphi}$, as any path passing through $t$ or $st$ can never result in an accept state. Thus the reduced automaton on the right in Figure~\ref{fig:A1} recognises the same language~$\cellL$. Explicitly we have $\cellL=\{(s),(s,t,s),(s,t,s,t,s),\ldots\}$. 
\end{example}

\begin{example}\label{ex:333a}
Let $S=\{s,t,u\}$ and consider the language $\cL$ over $S$ accepted by the automata in Figure~\ref{fig:333} (with start state $0$, all states accept states, and black, blue, red arrows indicating $s$, $t$, $u$ transitions respectively). 
\begin{figure}[H]
\centering
\begin{tikzpicture} [xscale=1.5,yscale=1.5]
\node[shape=circle,draw,rounded corners,fill=lightgray] at (0,0) (0) {$0$};
\node[shape=circle,draw,rounded corners,fill=lightgray] at (1,0) (3) {$3$};
\node[shape=circle,draw,rounded corners,fill=lightgray] at (2,0) (5) {$5$};
\node[shape=circle,draw,rounded corners,fill=lightgray] at (3,0) (7) {$7$};
\node[shape=circle,draw,rounded corners,fill=lightgray] at (4,0) (9) {$9$};
\node[shape=circle,draw,rounded corners,fill=lightgray] at (5,0) (11) {\scriptsize{$11$}};
\node[shape=circle,draw,rounded corners,fill=lightgray] at (6,0) (12) {\scriptsize{$12$}};
\node[shape=circle,draw,rounded corners,fill=lightgray] at (0,1) (2) {$2$};
\node[shape=circle,draw,rounded corners,fill=lightgray] at (0,-1) (1) {$1$};
\node[shape=circle,draw,rounded corners,fill=lightgray] at (1,1) (4) {$4$};
\node[shape=circle,draw,rounded corners,fill=lightgray] at (2,-1) (6) {$6$};
\node[shape=circle,draw,rounded corners,fill=lightgray] at (3,-1) (8) {$8$};
\node[shape=circle,draw,rounded corners,fill=lightgray] at (4,-1) (10) {\scriptsize{$10$}};
\draw[-latex,black,thick] (0)--(1);
\draw[-latex,blue,thick] (0)--(2);
\draw[-latex,red,thick] (0)--(3);
\draw[-latex,blue,thick] (1) to[out=90+50,in=270-50] (2);
\draw[-latex,red,thick] (2)--(3);
\draw[-latex,red,thick] (4)--(3);
\draw[-latex,red,thick] (1)--(3);
\draw[-latex,red,thick] (8)--(10);
\draw[-latex,red,thick] (7)--(9);
\draw[-latex,red,thick] (12) to[out=90+40,in=90-40] (9);
\draw[-latex,black,thick] (2)--(4);
\draw[-latex,black,thick] (7)--(4);
\draw[-latex,black,thick] (3)--(5);
\draw[-latex,black,thick] (6)--(8);
\draw[-latex,black,thick] (9)--(11);
\draw[-latex,blue,thick] (3)--(6);
\draw[-latex,blue,thick] (5)--(7);
\draw[-latex,blue,thick] (11)--(12);
\draw[-latex,blue,thick] (10) to[out=90+40,in=90-40] (6);
\end{tikzpicture}
\caption{The automaton $\cAlex$ for the triangle group $\Delta(3,3,3)$ (see Example~\ref{ex:333b})}\label{fig:333}
\end{figure}
\noindent The language~$\cL$ turns out to be the language of lexicographically minimal reduced words in the Coxeter group $W=\langle s,t,u\mid s^2=t^2=u^2=(st)^3=(su)^3=(tu)^3=1\rangle$ of type $\tilde{\mathsf{A}}_2$, with lexicographic order $s<t<u$ (see Section~\ref{sec:geodesic}). The set $\CircWord(\cL)$ consists of the words $(s,t,u)$, $(s,t,s,u)$ and all cyclic permutations of these words, and so a weight function $\varphi$ with $\varphi(s)=a$, $\varphi(t)=b$, and $\varphi(u)=c$ is bounded if and only if $a+b+c\leq 0$ and $2a+b+c\leq 0$. It follows that the cone $\cB(\cL)$ is generated by the bounded weight functions $\varphi_1,\varphi_2,\varphi_3,\varphi_4$ with 
 \begin{align*}
(\varphi_1(s),\varphi_1(t),\varphi_1(u))&=(0,1,-1) &(\varphi_2(s),\varphi_2(t),\varphi_2(u))&=(0,-1,1)\\
 (\varphi_3(s),\varphi_3(t),\varphi_3(u))&=(-1,0,1)&(\varphi_4(s),\varphi_4(t),\varphi_4(u))&=(1,0,-2).
 \end{align*}
 Note that $\cB(\cL)$ is not strictly convex in this example (as it contains the line generated by $\varphi_1$). 

There are $64$ elements in $\CircFree(\cL)$, and it follows that 
$
\boundL=\max\{
0,
a,
b,
c,
2a+c,
2a+b,
2b+c
\}.
$
For example, if $(a,b,c)=(1,-1,-1)$ then $\varphi$ is bounded, and $\boundL=1$. 
\end{example}

\section{Weight functions on groups}\label{sec:3}

In this section we consider the related notion of weight functions on finitely generated groups. 

\subsection{Definitions}\label{sec:definitions}

Let $G$ be a group generated by a finite set~$S$ (we do not assume that $S=S^{-1}$). Let $\ell:G\to \NN$ denote the associated \textit{length function}, where 
$$\ell(g)=\min\{k\geq 0\mid g=s_1\cdots s_k\text{ with }s_1,\ldots,s_k\in S\},$$ 
and so $d(g,h)=\ell(g^{-1}h)$ is the graph distance from $g$ to $h$ in the directed Cayley graph of $(G,S)$. Let $\sfp:S^*\to G$ be the natural map $\sfp(s_1,s_2,\ldots)=s_1s_2\cdots$. A word $w=(s_1,\ldots, s_k)\in S^*$ is \textit{reduced} if $\ell(\sfp(w))=k$. Let $\cL(G,S)\subseteq S^*$ denote the language of all reduced words in $(G,S)$. 

\begin{defn}
A \textit{weight function on $(G,S)$} is a function $\varphi:G\to\RR$ with $\varphi(gh)=\varphi(g)+\varphi(h)$ whenever $\ell(gh)=\ell(g)+\ell(h)$. A weight function $\varphi$ on $G$ is \textit{bounded} if there exists $N>0$ such that $\varphi(g)\leq N$ for all $g\in G$. If $\varphi$ is bounded we define the \textit{bound} $\bb_{G,S}(\varphi)$ of $\varphi$ and the \textit{cell} $\Ga_{G,S}(\varphi)$ recognised by~$\varphi$ as in the introduction. 
\end{defn}

It is clear that the set $\WW_{G,S}$ of all weight functions on $(G,S)$ is a real vector space, and since a weight function $\varphi$ on $(G,S)$ is completely determined by the real numbers $\varphi(s)$, $s\in S$, we have $\dim \WW_{G,S}\leq |S|$. However, unlike the case of weight functions on languages, not all choices of real numbers $a_s$, $s\in S$, necessarily extend to a weight function~$\varphi$ with $\varphi(s)=a_s$ (as illustrated in the following example), and so $\dim\WW_{G,S}$ may be strictly smaller than~$|S|$.  Since the map $\varphi(g)=\lambda \ell(g)$ is always a weight function for all $\lambda\in\RR$, we have $\dim\WW_{G,S}\geq 1$. Note also that the set $\cB(G,S)$ of all bounded weight functions on $(G,S)$ is a convex cone in~$\WW_{G,S}$. 

\begin{example}
The set of weight functions on~$(G,S)$ depends on the generating set~$S$ chosen. The modular group $G=\mathsf{PSL}_2(\ZZ)$ has the following presentations: 
$$G=\langle s,t\mid s^2=t^3=1\rangle=\langle s,u\mid s^2=(su)^3=1\rangle.$$ 
Taking $S=\{s,t\}$ each element $g\in G$ has a unique reduced expression (indeed $G\cong C_2*C_3$), and thus each assignment $\varphi(s)=a$ and $\varphi(t)=b$ with $a,b\in\RR$ extends to a weight function on $(G,S)$. Moreover the explicit form of reduced expressions implies that $\varphi$ is bounded if and only if $a+b\leq 0$ and $a+2b\leq 0$. If $S'=\{s,t,t^{-1}\}$ we again have unique reduced expressions, and this time each assignment $\varphi(s)=a$, $\varphi(t)=b$, and $\varphi(t^{-1})=c$ with $a,b,c\in\RR$ extends to a weight function on $(G,S')$, with $\varphi$ bounded if and only if $a+b\leq 0$ and $a+c\leq 0$. Finally, taking $S''=\{s,u,u^{-1}\}$ we have the relations $sus=u^{-1}su^{-1}$ and $usu=su^{-1}s$ (with each expression reduced) and it follows that every weight function $\varphi$ on $(G,S'')$ has $\varphi(s)=\varphi(u)=\varphi(u^{-1})$. Thus the only weight functions on $(G,S'')$ are the functions $\varphi(g)=\lambda \ell(g)$ for some $\lambda\in\mathbb{R}$, and such a weight function is bounded if and only if $\la\leq 0$.
\end{example}

The connection between weight functions on $(G,S)$ and weight functions on $S^*$ is given in the following proposition. A weight function $\varphi_0:S^*\to \RR$ is \textit{compatible} with $(G,S)$ if $\varphi_0(w)=\varphi_0(w')$ whenever $w,w'\in \cL(G,S)$ with $\sfp(w)=\sfp(w')$. 

\begin{prop}\label{prop:weightfunctionconnection}
A function $\varphi:G\to \RR$ is a weight function on $(G,S)$ if and only if there exists a weight function $\varphi_0:S^*\to\RR$ compatible with $(G,S)$ such that $\varphi_0(w)=\varphi(\sfp(w))$ for all $w\in\cL(G,S)$. 
\end{prop}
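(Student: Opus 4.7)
The plan is to verify both directions by exploiting the fact that if $w = (s_1, \ldots, s_k) \in \cL(G,S)$ is reduced then every prefix $(s_1, \ldots, s_j)$ is also reduced, so the partial products $g_j = s_1 \cdots s_j$ satisfy $\ell(g_j) = j$ and in particular $\ell(g_j \cdot s_{j+1}) = \ell(g_j) + 1$ for all $j < k$.

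For the forward direction, assume $\varphi : G \to \RR$ is a weight function on $(G,S)$. I would define $\varphi_0 : S^* \to \RR$ on generators by $\varphi_0(s) = \varphi(s)$ for $s \in S$ and extend additively, so that $\varphi_0$ is automatically a weight function on $S^*$. For $w = (s_1, \ldots, s_k) \in \cL(G,S)$, I would prove by induction on $k$ that $\varphi(\sfp(w)) = \varphi_0(w)$: the base $k = 0$ is immediate since $\varphi(1) = 0$ (take $g = h = 1$ in the weight function identity), and for the inductive step the observation above gives $\ell(g_{k-1} s_k) = \ell(g_{k-1}) + 1$, so the weight function property on $G$ yields $\varphi(\sfp(w)) = \varphi(g_{k-1}) + \varphi(s_k) = \varphi_0(s_1, \ldots, s_{k-1}) + \varphi_0(s_k) = \varphi_0(w)$. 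Compatibility is then automatic: if $w, w' \in \cL(G,S)$ with $\sfp(w) = \sfp(w')$ then $\varphi_0(w) = \varphi(\sfp(w)) = \varphi(\sfp(w')) = \varphi_0(w')$.

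For the backward direction, assume a compatible $\varphi_0$ exists with $\varphi_0(w) = \varphi(\sfp(w))$ for all $w \in \cL(G,S)$. I need to check that $\varphi(gh) = \varphi(g) + \varphi(h)$ whenever $\ell(gh) = \ell(g) + \ell(h)$. Pick reduced expressions $u \in \cL(G,S)$ for $g$ and $v \in \cL(G,S)$ for $h$. The length hypothesis guarantees that $u \cdot v$ is a reduced expression for $gh$, so $u \cdot v \in \cL(G,S)$ as well. Then applying additivity of $\varphi_0$ on $S^*$ together with the hypothesis on reduced words gives
\[
\varphi(gh) = \varphi_0(u \cdot v) = \varphi_0(u) + \varphi_0(v) = \varphi(g) + \varphi(h),
\]
as required.

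There is no real obstacle here; the only point requiring a moment of care is the inductive step in the forward direction, where one must note that prefixes of reduced words remain reduced in order to invoke the weight function identity on $G$ one letter at a time. Once this is observed, both implications are essentially a translation of the definitions.
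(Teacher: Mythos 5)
Your proof is correct and follows essentially the same route as the paper: define $\varphi_0$ on generators and extend additively for the forward direction, and concatenate reduced expressions for the converse. The only difference is that you spell out as an explicit induction (via the fact that prefixes of reduced words are reduced) what the paper compresses into the single line $\varphi(s_1\cdots s_n)=\varphi(s_1)+\cdots+\varphi(s_n)$.
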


\begin{proof}
Let $\varphi:G\to\RR$ be a weight function on $(G,S)$. If $w=(s_1,\ldots,s_n)\in\cL(G,S)$ with $\sfp(w)=g$ then $\varphi(g)=\varphi(s_1\cdots s_n)=\varphi(s_1)+\cdots+\varphi(s_n)=\varphi_0(w)$ where $\varphi_0:S^*\to \RR$ is the weight function with $\varphi_0(s)=\varphi(s)$ for all $s\in S$, and it follows that $\varphi_0$ is compatible with $(G,S)$. Conversely, if $\varphi_0:S^*\to \RR$ is compatible with $(G,S)$ then setting $\varphi(g)=\varphi_0(w)$ for any $w\in\cL(G,S)$ with $\sfp(w)=g$ is well defined, and if $g=g_1g_2$ with $\ell(g_1g_2)=\ell(g_1)+\ell(g_2)$ then choosing $w_1,w_2\in\cL(G,S)$ with $\sfp(w_1)=g_1$ and $\sfp(w_2)=g_2$ we have $\sfp(w_1\cdot w_2)=g$ and $\varphi(g)=\varphi_0(w_1\cdot w_2)=\varphi_0(w_1)+\varphi(w_2)=\varphi(g_1)+\varphi(g_2)$ and so $\varphi$ is a weight function on~$(G,S)$.  
\end{proof}

In other words, Proposition~\ref{prop:weightfunctionconnection} says that $\WW_{G,S}$ is naturally isomorphic to the subspace of $\WW_S$ consisting of all weight functions on $S^*$ compatible with~$(G,S)$. 

\subsection{Geodesic languages}\label{sec:geodesic}

\begin{defn}
A \textit{geodesic language} for $(G,S)$ is a language $\cL\subseteq \cL(G,S)$ such that $\sfp:\cL\to G$ is surjective. A geodesic language $\cL$ is \textit{exact} if $\sfp:\cL\to G$ is a bijection (that is, each element of $G$ is represented by a unique word in~$\cL$). 
\end{defn}

For example, $\cL(G,S)$ is a geodesic language, and the lexicographically minimal reduced expressions $\cLlex(G,S)$, defined below, is an exact geodesic language. To define $\cLlex(G,S)$, fix an arbitrary total order $\leq$ on $S$ and extend to the \textit{shortlex} total order $\leq$ on~$S^*$ (with words ordered by length, with words of the same length ordered lexicographically). For each $g\in G$ there exists a unique word $\mathsf{lex}(g)\in \cL(G,S)$ representing $g$ minimal in the shortlex order, and we set 
$$
\cLlex(G,S)=\{\mathsf{lex}(g)\mid g\in G\}.
$$

We are particularly interested in the case where $(G,S)$ admits a geodesic language that is regular. For example if $(G,S)$ admits a geodesic automatic structure in the sense of~\cite[\S5.3]{HRR:17} then there exists a regular geodesic language for~$(G,S)$ (however note that by~\cite[Section~3.5]{Eps:92} not all automatic groups admit a geodesic automatic structure for a fixed generating set). 

We now prove Corollary~\ref{cor1:groups}.

\begin{proof}[Proof of Corollary~\ref{cor1:groups}]
Taking $X=\{\sfp(w)\mid w\in\CircWord(\cL)\}$ and $Y=\{\sfp(w)\mid w\in\CircFree(\cL)\}$ the corollary follows immediately from Theorem~\ref{thm:bounded} and Proposition~\ref{prop:weightfunctionconnection}. 
\end{proof}

\section{Coxeter groups}\label{sec:4}

In this section we specialise to the case of Coxeter groups. Let $S$ be a finite set, and 
$$
W=\langle S\mid (st)^{m_{s,t}}=1\text{ for $s,t\in S$}\rangle,
$$
where $m_{s,s}=1$ for all $s\in S$, and $m_{s,t}=m_{t,s}\in\ZZ_{\geq 2}\cup\{\infty\}$ for $s,t\in S$ with $s\neq t$ (if $m_{s,t}=\infty$ it is understood that the relation $(st)^{m_{s,t}}=1$ is omitted). 

A Coxeter system is \textit{irreducible} if there exists no nontrivial partition $S=S_1\sqcup S_2$ with $m_{s,t}=2$ for all $s\in S_1$ and $t\in S_2$. If $(W,S)$ is \textit{spherical} (that is, $|W|<\infty$) there exists a unique element $\sw_0\in W$ of maximal length. For $J\subseteq S$ we write $W_J=\langle J\rangle$ for the associated parabolic subgroup, and if $(W_J,J)$ is spherical write $\sw_J$ for the longest element of~$W_J$.

It was shown by Brink and Howlett~\cite{BH:93} that both $\cLlex(W,S)$ and $\cL(W,S)$ are regular languages. There are various explicit constructions of automata recognising these languages (see \cite{BH:93,Cas:94,Cas:94b} for $\cLlex(W,S)$, and~\cite{HNW:16,PY:19,PY:22,HN:23} for $\cL(W,S)$). For computational purposes it is usually more efficient to work with $\cLlex=\cLlex(W,S)$ as it typically admits an automaton with a smaller number of states, and for our purposes this leads to smaller sets $\CircFree(\cLlex)$ and $\CircWord(\cLlex)$.

In this section we make Theorem~\ref{thm1:cones} explicit for various classes of Coxeter groups (via Theorem~\ref{thm:bounded} and Corollary~\ref{cor1:groups}).

\subsection{The minimal automata recognising $\cLlex(W,S)$}

Let $\cAlex=\cAlex(W,S)$ denote the minimal deterministic automaton recognising~$\cLlex=\cLlex(W,S)$. We describe $\cAlex$ briefly below (following~\cite{Cas:94,Cas:94b}). The \textit{shortlex cone type} of $x\in W$ is 
$$
T(x)=\{y\in W\mid \lex(xy)=\lex(x)\cdot \lex(y)\}.
$$
Let $\cT=\{T(x)\mid x\in W\}$ denote the set of all shortlex cone types. Since $\cLlex$ is regular, the proof of the Myhill-Nerode Theorem (see \cite[Theorem~2.5.4]{HRR:17} and \cite[Theorem~1.23]{PY:22}) implies that $|\cT|<\infty$.

The following proposition (with elementary proof omitted) gives an inductive procedure for calculation of $T(x)$ (see \cite[Appendix]{GMP:18}, \cite{HN:23}, and \cite{PY:22} for details on cone types in the language $\cL(W,S)$). 

\begin{prop}\label{prop:conetype}
Let $s\in S$ and $T\in\cT$. 
\begin{compactenum}[$(1)$]
\item We have $T(s)=\{x\in W\mid \ell(sx)=\ell(x)+1\text{ and }\ell(tsx)=\ell(x)+2\text{ for all $t\in S$ with $t< s$}\}$.
\item Suppose that $s\in T$. Then $T'=sT\cap T(s)$ is a cone type, and for all $x\in W$ with $T=T(x)$ we have $\lex(xs)=\lex(x)\cdot s$ and $T'=T(xs)$.  
\end{compactenum}
\end{prop}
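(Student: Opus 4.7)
The strategy for both parts is to unpack the definition $T(x)=\{y\in W\mid \lex(xy)=\lex(x)\cdot\lex(y)\}$ using two elementary Coxeter-theoretic facts: the Exchange Condition, which gives $\ell(tw)\in\{\ell(w)-1,\ell(w)+1\}$ for all $t\in S$ and $w\in W$; and the observation that the shortlex-minimal reduced word for $z\in W$ begins with $t\in S$ if and only if $t$ is the smallest element of $S$ (in the fixed order) with $\ell(tz)=\ell(z)-1$.

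For part~(1), I would rewrite $y\in T(s)$ as $\lex(sy)=s\cdot\lex(y)$ and analyse when this equality holds. That $s\cdot\lex(y)$ is a reduced word for $sy$ is equivalent to $\ell(sy)=\ell(y)+1$. For this reduced word to be shortlex-minimal, the second fact above (applied to $z=sy$) requires that no $t<s$ satisfies $\ell(tsy)=\ell(sy)-1$; combined with the Exchange Condition, this is equivalent to $\ell(tsy)=\ell(y)+2$ for all such $t$. Since any reduced word for $sy$ starting with $s$ is $s$ followed by a reduced word for $y$, the lex-minimum among such words is automatically $s\cdot\lex(y)$, so the two length conditions together characterise~$T(s)$.

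For part~(2), the identity $\lex(xs)=\lex(x)\cdot\lex(s)=\lex(x)\cdot s$ is the definition of $s\in T(x)$. To prove the set equality $T(xs)=sT(x)\cap T(s)$ I plan a bidirectional argument. The reverse inclusion is a direct concatenation: if $sy\in T(x)$ and $y\in T(s)$, then $\lex(xsy)=\lex(x)\cdot\lex(sy)=\lex(x)\cdot s\cdot\lex(y)=\lex(xs)\cdot\lex(y)$, so $y\in T(xs)$. The forward inclusion is the main step: given $y\in T(xs)$, length comparison applied to $\lex(xsy)=\lex(x)\cdot s\cdot\lex(y)$ forces $\ell(sy)=\ell(y)+1$ and $\ell(xsy)=\ell(x)+\ell(sy)$, so $\lex(x)\cdot\lex(sy)$ is a reduced word for $xsy$ of the same length as $\lex(xsy)$; if $\lex(sy)$ were strictly shortlex-smaller than $s\cdot\lex(y)$, this would yield a reduced word for $xsy$ shortlex-smaller than $\lex(xsy)$, a contradiction. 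Hence $y\in T(s)$, and substituting $\lex(sy)=s\cdot\lex(y)$ gives $\lex(x\cdot sy)=\lex(x)\cdot\lex(sy)$, i.e., $sy\in T(x)$.

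The main point to check, and the reason this proposition underpins the construction of~$\cAlex$, is that $T'=sT\cap T(s)$ depends only on the cone type $T$ and not on the chosen representative $x$ with $T(x)=T$. This is automatic once the identity $T'=T(xs)$ has been verified uniformly in $x$, and simultaneously shows that $T'\in\cT$.
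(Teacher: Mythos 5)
Your proposal is correct. The paper explicitly omits this proof, describing it as elementary, so there is nothing to compare against; your argument is the natural one and fills the gap soundly. In particular, the two points that need care are both handled: in part~(1) you correctly reduce shortlex-minimality of $s\cdot\lex(y)$ to the statement that $s$ is the smallest left descent of $sy$, using that the lex-minimum among reduced words for $sy$ beginning with $s$ is automatically $s\cdot\lex(y)$; and in part~(2) the forward inclusion correctly extracts $\ell(sy)=\ell(y)+1$ and $\ell(xsy)=\ell(x)+\ell(sy)$ from the length of $\lex(x)\cdot s\cdot\lex(y)$ before the lex-comparison argument, which is what makes the substitution $\lex(sy)=s\cdot\lex(y)$ legitimate. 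The closing observation that $T'=sT\cap T(s)$ is independent of the representative $x$ is exactly the point that justifies Theorem~\ref{thm:minimalshortlex}.
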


\begin{thm}\label{thm:minimalshortlex}
We have $\cAlex=(\cT,S,T(e),\tau,\cT)$ where for $T\in\cT$ and $s\in S$ the transition function $\tau(T,s)$ is defined if and only if $s\in T$, and in this case $
\tau(T,s)=sT\cap T(s)$. 
\end{thm}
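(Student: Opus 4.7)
The plan is to establish two claims: first, that the automaton $\cA = (\cT, S, T(e), \tau, \cT)$ displayed in the statement recognises $\cLlex$; and second, that $\cA$ is minimal, so that $\cA = \cAlex$. Before starting I would note that $T(e) = W$, so the start state lies in $\cT$ and in particular the transition function is defined at every $s \in S$ from the start state.

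For the recognition claim, I would argue by induction on word length that for $w = (s_1,\ldots,s_n) \in S^*$ there is a path in $\cA$ starting at $T(e)$ and labelled $w$ if and only if $w \in \cLlex$, and that in this case the terminal state is exactly $T(\sfp(w))$. The inductive step is the crux: assuming the claim for the prefix $w_{n-1}$, the path extends by $s_n$ iff $s_n \in T(\sfp(w_{n-1}))$, which by Proposition~\ref{prop:conetype}(2) is equivalent to $\lex(\sfp(w_n)) = \lex(\sfp(w_{n-1})) \cdot s_n$ (so $w_n = \lex(\sfp(w_n)) \in \cLlex$), and in that case $\tau(T(\sfp(w_{n-1})), s_n) = s_n T(\sfp(w_{n-1})) \cap T(s_n) = T(\sfp(w_n))$ as required. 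The converse direction uses the fact that $\cLlex$ is prefix-closed: if $w = \lex(x) \in \cLlex$ and some prefix $w_i$ were not shortlex-minimal for $\sfp(w_i)$, replacing $w_i$ by a shortlex-smaller reduced expression would yield a shortlex-smaller reduced expression for $x$, a contradiction. Since every state of $\cA$ is accepting, $\mathsf{Acc}(\cA) = \cLlex$ follows.

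For minimality I would invoke Myhill-Nerode. Running the recognition argument from an arbitrary state $T(x)$ in place of $T(e)$, I would show that the right language of $T(x)$ (the set of labels of paths from $T(x)$ to an accept state) equals $\{\lex(y) \mid y \in T(x)\}$. If $T(x) \neq T(x')$, then some $y$ lies in the symmetric difference $T(x) \triangle T(x')$, and the word $\lex(y)$ separates the right languages of the two states; hence no two states of $\cA$ can be merged, and $\cA$ coincides with the minimal automaton $\cAlex$. The hard part of the proof is not really any single step — the heavy algebraic work is packaged inside Proposition~\ref{prop:conetype} — but the most delicate bookkeeping is the right-language identification underlying minimality, which crucially relies on the prefix-closure of $\cLlex$ together with the equivalence $y \in T(x) \iff \lex(x) \cdot \lex(y) \in \cLlex$. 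Once these points are in hand the remainder is routine.
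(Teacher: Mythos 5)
Your proof is correct and takes essentially the same route as the paper, whose own proof consists of the single remark that the result ``follows from Proposition~\ref{prop:conetype} and the standard proof of the Myhill--Nerode Theorem''; you have simply supplied the induction on word length (for recognition) and the right-language separation argument (for minimality) that the paper leaves to the reader. The only detail worth stating explicitly is that minimality also requires every state $T(x)$ to be reachable, which your recognition induction already provides via the path labelled $\lex(x)$ from $T(e)$ to $T(x)$.
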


\begin{proof}
This follows from Proposition~\ref{prop:conetype} and the standard proof of the Myhill-Nerode Theorem (see \cite[Theorem~1.23]{PY:22} for a proof in a similar context). 
\end{proof}

\subsection{Weight functions on Coxeter groups}

It is easy to describe the set of all weight functions on a Coxeter group. 

\begin{prop}[{see \cite[\S3]{Lus:03}}]\label{prop:wf}
Let $(W,S)$ be a Coxeter system, and let $(a_s)_{s\in S}$ be an $|S|$-tuple of real numbers. There exists a weight function $\varphi:W\to \RR$ with $\varphi(s)=a_s$ if and only if $a_s=a_t$ whenever $m_{s,t}$ is odd. 
\end{prop}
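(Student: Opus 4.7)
The plan is to prove both implications using Matsumoto's theorem, which asserts that any two reduced expressions for the same element of $W$ are connected by a sequence of braid moves $\underbrace{sts\cdots}_{m_{s,t}} \leftrightarrow \underbrace{tst\cdots}_{m_{s,t}}$ with $m_{s,t}<\infty$. For the necessity direction, I would fix $s,t\in S$ with $m_{s,t}=2k+1$ odd, and let $w\in W$ be the common value of the two reduced expressions $u_1=\underbrace{sts\cdots}_{2k+1}$ and $u_2=\underbrace{tst\cdots}_{2k+1}$. Since $u_1$ contains $k+1$ letters $s$ and $k$ letters $t$ while $u_2$ has these counts swapped, iterating additivity (legal because every prefix of a reduced expression is reduced) along each of the two expressions yields $\varphi(w)=(k+1)a_s+ka_t=ka_s+(k+1)a_t$, forcing $a_s=a_t$.

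For the sufficiency direction, assume the compatibility condition on $(a_s)_{s\in S}$. The idea is to define a function $\varphi_0:\cL(W,S)\to\RR$ on reduced words by $\varphi_0(s_1,\ldots,s_n)=a_{s_1}+\cdots+a_{s_n}$, and to show via Matsumoto's theorem that $\varphi_0$ is constant on the fibres of $\sfp$, hence descends to a function $\varphi:W\to\RR$ with $\varphi(s)=a_s$. To verify invariance of $\varphi_0$ under a single braid move of order $m_{s,t}$, I would separate two cases: when $m_{s,t}$ is even, both sides of the move contain $m_{s,t}/2$ copies of each of $s$ and $t$, so $\varphi_0$ is automatically preserved; when $m_{s,t}$ is odd, the counts of $s$ and $t$ are swapped between the two sides, and invariance reduces precisely to the hypothesis $a_s=a_t$.

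Finally, to confirm that $\varphi$ so defined is a weight function, I would note that if $\ell(gh)=\ell(g)+\ell(h)$, then concatenating reduced expressions for $g$ and $h$ produces a reduced expression for $gh$, whence $\varphi(gh)=\varphi(g)+\varphi(h)$ is immediate from the definition of $\varphi_0$. The argument rests entirely on Matsumoto's theorem, which is a classical structural result about Coxeter groups, so there is no genuine obstacle; the substantive point is simply the case analysis distinguishing braid moves with $m_{s,t}$ even (where letter multiplicities are preserved) from those with $m_{s,t}$ odd (where the hypothesis $a_s=a_t$ is used).
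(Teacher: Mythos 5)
Your proof is correct. The paper gives no argument of its own for this proposition (it is quoted from Lusztig's book, \S3), and your Matsumoto-based argument --- extracting $a_s=a_t$ from the two reduced expressions for the longest element of an odd dihedral parabolic, and checking invariance of the letter-count sum under braid moves for the converse --- is precisely the standard proof that the citation points to.
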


For example, in a Coxeter group of type $\tilde{\sC}_n$ with $n\geq 2$ (with standard Bourbaki~\cite{Bou:02} labelling) the weight functions are given by the choices $\varphi(s_0)=a$, $\varphi(s_1)=\varphi(s_2)=\cdots=\varphi(s_{n-1})=b$ and $\varphi(s_n)=c$, with $a,b,c\in\RR$, while for a simply laced irreducible Coxeter system the only weight functions are the functions $\varphi(g)=\lambda\ell(g)$ with $\lambda\in\mathbb{R}$.

If $\varphi$ is a weight function on $(W,S)$ we write 
\begin{align*}
S_{\varphi}^+&=\{s\in S\mid \varphi(s)>0\},& S_{\varphi}^-&=\{s\in S\mid \varphi(s)<0\},& S_{\varphi}^0&=\{s\in S\mid \varphi(s)=0\},
\end{align*} 
and let $W_{\varphi}^+=\langle S_{\varphi}^+\rangle$ and similarly for $W_{\varphi}^-$ and $W_{\varphi}^0$. In particular, note that these subgroups are standard parabolic subgroups of $(W,S)$. 

\begin{prop}\label{prop:basic1}
If $\varphi$ is a bounded weight function on a Coxeter system~$(W,S)$ then every element of $\cellW$ is both a maximal length $(W_{\varphi}^+,W_{\varphi}^+)$-double coset representative, and a minimal length $(W_{\varphi}^-,W_{\varphi}^-)$-double coset representative, and the cell $\cellW$ is a union of $(W_{\varphi}^0,W_{\varphi}^0)$-double cosets. 
\end{prop}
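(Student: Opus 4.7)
The plan is to analyze, for $w \in \cellW$ and each generator $s \in S$, how $\varphi(sw)$ compares with $\varphi(w) = \boundW$. There are two possibilities: if $\ell(sw) = \ell(w) + 1$ then $\varphi(sw) = \varphi(s) + \boundW$, while if $\ell(sw) = \ell(w) - 1$ then $w = s\cdot(sw)$ with lengths adding, so $\varphi(sw) = \boundW - \varphi(s)$. Boundedness forces $\varphi(sw) \le \boundW$ in either case, and the sign of $\varphi(s)$ dictates which is allowed. The symmetric analysis applies to $\varphi(ws)$ on the right.

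I would then deduce: for $s \in S_\varphi^+$ the case $\ell(sw) > \ell(w)$ would give $\varphi(sw) > \boundW$, a contradiction, so $\ell(sw) < \ell(w)$ and similarly $\ell(ws) < \ell(w)$; for $s \in S_\varphi^-$ the case $\ell(sw) < \ell(w)$ would give $\varphi(sw) > \boundW$, so $\ell(sw) > \ell(w)$ and likewise on the right; and for $s \in S_\varphi^0$ both cases collapse to $\varphi(sw) = \varphi(w)$. Since every reduced expression of an element $u \in W_\varphi^0$ uses only letters from $S_\varphi^0$ (a standard property of parabolic subgroups, via Matsumoto's theorem and the fact that braid and commutation moves preserve the multiset of letters), induction on $\ell(u)$ gives $\varphi(uw) = \varphi(w)$ for all $u \in W_\varphi^0$, and symmetrically $\varphi(wv) = \varphi(w)$. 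Combining, $\varphi(uwv) = \boundW$ for all $u,v \in W_\varphi^0$, so $uwv \in \cellW$, proving part~(3).

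Parts~(1) and~(2) then follow from the standard descent-set characterizations of parabolic double coset representatives: $w$ is a minimum-length representative of $W_I w W_J$ iff $\ell(sw) > \ell(w)$ for all $s \in I$ and $\ell(ws) > \ell(w)$ for all $s \in J$, and if $W_I$ is finite then $w$ is a maximum-length representative of $W_I w W_I$ iff $\ell(sw) < \ell(w)$ and $\ell(ws) < \ell(w)$ for all $s \in I$. The first yields part~(2) with $I = J = S_\varphi^-$, and the second yields part~(1) with $I = S_\varphi^+$, provided $W_\varphi^+$ is finite. This finiteness is forced by boundedness: were $W_\varphi^+$ infinite, it would contain elements of arbitrary length $n$, each of $\varphi$-value at least $n\cdot\min_{s\in S_\varphi^+}\varphi(s) > 0$, contradicting that $\varphi$ is bounded on~$W$.

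The main obstacle is the maximum-length half of the double coset characterization used in part~(1). The one-sided inequalities $\ell(uw), \ell(wv) \le \ell(w)$ for $u,v\in W_\varphi^+$ follow cleanly by writing $w = \sw_{S_\varphi^+} w_1 = w_2 \sw_{S_\varphi^+}$ with lengths adding and invoking $\ell(\gamma w_1) = \ell(\gamma) + \ell(w_1)$ for $\gamma \in W_\varphi^+$ (which uses that $w_1$ has no left descent in $S_\varphi^+$); combining these decompositions into the two-sided bound $\ell(uwv) \le \ell(w)$ requires a finer appeal to the parabolic double coset structure, and is the place where some care is needed.
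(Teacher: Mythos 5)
Your proposal is correct and follows essentially the same route as the paper: the paper's proof consists of exactly the descent computation you give (for $g\in\cellW$ and $s\in S_{\varphi}^+$, $\ell(sg)=\ell(g)+1$ would force $\varphi(sg)>\boundW$, hence $\ell(sg)=\ell(g)-1$ and likewise on the right), followed by the assertion that this makes $g$ maximal in its $(W_{\varphi}^+,W_{\varphi}^+)$-double coset, with the remaining parts declared ``similar''. The maximal-length descent characterisation you flag as delicate is precisely the standard double-coset fact the paper invokes without proof, and your observation that boundedness forces $W_{\varphi}^+$ to be finite is a correct (implicitly needed) prerequisite, so there is no gap relative to the paper's own level of detail.
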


\begin{proof}
Let $g\in \cellW$. If $s\in S_{\varphi}^+$ with $\ell(sg)=\ell(g)+1$ then $\varphi(sg)=\varphi(g)+\varphi(s)>\varphi(g)=\boundW$, a contradiction. Thus $\ell(sg)=\ell(g)-1$ and similarly $\ell(gs)=\ell(g)-1$ for all $s\in S_{\varphi}^+$, showing that $g$ is maximal length in its $(W_{\varphi}^+,W_{\varphi}^+)$-double coset. The remaining statements are similar.   
\end{proof}

\subsection{Triangle groups}

The \textit{triangle group} $W=\Delta(p,q,r)$ is the Coxeter group with generating set $S=\{s,t,u\}$ and presentation
$$
\Delta(p,q,r)=\langle s,t,u\mid s^2=t^2=u^2=(su)^p=(st)^q=(tu)^r=1\rangle,
$$
where $p,q,r\geq 2$. This group is spherical (respectively affine, hyperbolic) if $p^{-1}+q^{-1}+r^{-1}$ is greater than $1$ (respectively equal to $1$, less than $1$). While it would be possible to make Theorem~\ref{thm1:cones} explicit for all triangle groups, the analysis would be rather complicated with case distinctions and parity considerations. Therefore we content ourselves here to some illustrative examples. 

\begin{example}[The group $\Delta(2,4,6)$]\label{ex:246}

Consider the hyperbolic triangle group $\Delta(2,4,6)$. 
By Proposition~\ref{prop:wf} each assignment $\varphi(s)=a$, $\varphi(t)=b$, and $\varphi(u)=c$ with $a,b,c\in\RR$ extends to a weight function.

The minimal shortlex automaton $\cAlex$ (using the order $s<t<u$) can be computed using Proposition~\ref{prop:conetype} and Theorem~\ref{thm:minimalshortlex} (or using Derek Holt's KBMAG algorithms~\cite{HRR:17} which are implemented in $\mathsf{MAGMA}$~\cite{MAGMA}). The automaton is illustrated in Figure~\ref{fig:246}, where black, blue, and red arrows indicate $s$, $t$, and $u$ transitions respectively, $0$ is the start state, and all states are accept states. 
\begin{figure}[H]
\centering
\begin{tikzpicture} [xscale=1.5,yscale=1.1]
\node[shape=circle,draw,rounded corners,fill=lightgray] at (-0.4,0) (e) {$0$};
\node[shape=circle,draw,rounded corners,fill=lightgray] at (-1.5,0) (1) {$1$};
\node[shape=circle,draw,rounded corners,fill=lightgray] at (0.5,1) (2) {$2$};
\node[shape=circle,draw,rounded corners,fill=lightgray] at (1.75,1) (21) {\scriptsize{$10$}};
\node[shape=circle,draw,rounded corners,fill=lightgray] at (3,1) (212) {\scriptsize{$11$}};
\node[shape=circle,draw,rounded corners,fill=lightgray] at (4.5,1) (3232132) {$9$};
\node[shape=circle,draw,rounded corners,fill=lightgray] at (6,1) (323213) {$8$};
\node[shape=circle,draw,rounded corners,fill=lightgray] at (0.5,-1) (3) {$3$};
\node[shape=circle,draw,rounded corners,fill=lightgray] at (1.75,-1) (32) {$4$};
\node[shape=circle,draw,rounded corners,fill=lightgray] at (3,-1) (323) {$5$};
\node[shape=circle,draw,rounded corners,fill=lightgray] at (4.5,-1) (3232) {$6$};
\node[shape=circle,draw,rounded corners,fill=lightgray] at (6,-1) (32321) {$7$};
\node[shape=circle,draw,rounded corners,fill=lightgray] at (4.5,-2) (32323) {\scriptsize{$12$}};
\draw[-latex,thick] (e)--(1);
\draw[-latex,blue,thick] (e)--(2);
\draw[-latex,red,thick] (e)--(3);
\draw[-latex,blue,thick] (1)--(2);
\draw[-latex,red,thick] (1)--(3);
\draw[-latex,red,thick] (2)--(3);
\draw[-latex,thick] (2)--(21);
\draw[-latex,blue,thick] (3)--(32);
\draw[-latex,red,thick] (21)--(3);
\draw[-latex,blue,thick] (21)--(212);
\draw[-latex,thick] (32)--(21);
\draw[-latex,red,thick] (32)--(323);
\draw[-latex,red,thick] (212)--(3);
\draw[-latex,blue,thick] (323)--(3232);
\draw[-latex,red,thick] (3232)--(32323);
\draw[-latex,thick] (3232)--(32321);
\draw[-latex,blue,thick] (32321)--(212);
\draw[-latex,red,thick] (32321)--(323213);
\draw[-latex,blue,thick] (323213)--(3232132);
\draw[-latex,thick] (3232132)--(212);
\draw[-latex,red,thick] (3232132)--(323);
\end{tikzpicture}
\caption{The automata $\cAlex$ for the triangle group $\Delta(2,4,6)$}\label{fig:246}
\end{figure}

%
%
%

By inspection of Figure~\ref{fig:246} there are $5$ simple circuits in~$\cAlex$, given by $3\to 4\to 10\to 3$, $3\to 4\to 10\to 11\to 3$, $3\to 4\to 5\to 6\to 7\to 11\to 3$, $3\to 4\to 5\to 6\to 7\to 8\to 9\to 11\to 3$, and $5\to 6\to 7\to 8\to 9\to 5$. Thus by Theorem~\ref{thm:bounded} the weight function $\varphi$ is bounded if and only if 
$$
a+b+c\leq 0,\quad a+2b+c\leq 0,\quad a+3b+2c\leq 0,\quad\text{and}\quad  a+2b+2c\leq 0
$$ (note that the inequality $2a+3b+3c\leq 0$ arising from the simple circuit $3\to 4\to 5\to 6\to 7\to 8\to 9\to 11\to 3$ is redundant). It follows that the rational cone $\cB(\cLlex)$ is generated by the bounded weight functions $\varphi_1,\varphi_2,\varphi_3,\varphi_4$ with
\begin{align*}
(\varphi_1(s),\varphi_1(t),\varphi_1(u))&=(1,0,-1) &(\varphi_2(s),\varphi_2(t),\varphi_2(u))&=(0,-1,1)\\
 (\varphi_3(s),\varphi_3(t),\varphi_3(u))&=(-1,1,-1)&(\varphi_4(s),\varphi_4(t),\varphi_4(u))&=(-2,0,1)
 \end{align*}
 (unlike Example~\ref{ex:333a}, here the cone $\cB(\cLlex)$ is strictly convex). 

By Theorem~\ref{thm:bounded} the bound of a bounded weight function is attained on $\CircFree(\cLlex)$. Determining this set is a straightforward but somewhat tedious exercise (there are $92$ circuit free words). By directly considering the values of a bounded weight function $\varphi$ on this finite set of words (and making use of the inequalities $a+b+c\leq 0$, $a+2b+c\leq 0$, $a+3b+2c\leq 0$, and $a+2b+2c\leq 0$) we see that 
$$
\boundW=\max\{0,a,b,c,a+2b,2a+b,2a+2b,2b+3c,3b+2c,3b+3c,a+c\}.
$$

%
%
%
%

In particular, the weight function $\varphi=\varphi_3$ listed above is bounded, with $\boundW=1$. In this case two of the above inequalities determining boundedness become equalities: $a+2b+c=0$ and $a+3b+2c=0$. This gives rise to two simple $\varphi$-circuits in $\cAlex$, given by $3\mapsto 4\mapsto 10\mapsto 11\mapsto 3$ and $3\mapsto 4\mapsto 5\mapsto 6\mapsto 7\mapsto 11\mapsto 3$. The subset of words $w\in \CircFree(\cLlex)$ with $\varphi(\sfp(w))=\boundW=1$ is
$
\cL_1=\{t,tst,tut,tstut,tutut,tstutut,tututst\}
$
(these give the accept states of the automaton $\cA_{\varphi}$ constructed in the proof of Theorem~\ref{thm:bounded}). From the construction of $\cA_{\varphi}$ it is clear that we can ignore the states from $\CircFree(\cLlex)$ that are not prefixes of one of the above accept states. Therefore we must consider the the following subset of $\CircFree(\cLlex)$:
$$
X_0=\{e,t,ts,tu,tst,tut,tstu,tstut,tutu,tutut,tstutu,tstutut,tututs,tututst\}.
$$
After appending circuits to each states $x\in X_0$ such that there is a simple $\varphi$-circuit in $\cAlex$ based at the corresponding state $\st(x)$ of~$\cAlex$, we arrive at the automaton~$\cA_{\varphi}$ illustrated in Figure~\ref{fig:246a}. The labelled states correspond to the subset $X_0$ of $\CircFree(\cLlex)$, and we have used the notation $\substack{x\\ \st(x)}$ to denote the states (where $x\in X_0$ and $\st(x)$ is the corresponding state of $\cAlex$). The accept states are shaded.

\begin{figure}[H]
\centering
\begin{tikzpicture} [xscale=2,yscale=1.35]
\node[shape=rectangle,draw,rounded corners] at (0.65,0) (e) {$\substack{e\\0}$};
\node[shape=rectangle,draw,rounded corners,fill=lightgray]  at (1.3,0) (2) {$\substack{t\\2}$};
\node[shape=rectangle,draw,rounded corners] at (2,2-.2) (21) {$\substack{ts\\10}$};
\node[shape=rectangle,draw,rounded corners,fill=lightgray]  at (3,2-.2) (212) {$\substack{tst\\11}$};
\node[shape=rectangle,draw,rounded corners] at (4,2-.2) (2123) {$\substack{tstu\\3}$};
\node[shape=rectangle,draw,rounded corners,fill=lightgray]  at (5,2-.2) (21232) {$\substack{tstut\\4}$};
\node[shape=rectangle,draw,rounded corners] at (6,2-.2) (212323) {$\substack{tstutu\\5}$};
\node[shape=rectangle,draw,rounded corners,fill=lightgray]  at (7,2-.2) (2123232) {$\substack{tstutut\\6}$};
\node[shape=rectangle,draw,rounded corners] at (2,-2+.2) (23) {$\substack{tu\\3}$};
\node[shape=rectangle,draw,rounded corners,fill=lightgray]  at (3,-2+.2) (232) {$\substack{tut\\4}$};
\node[shape=rectangle,draw,rounded corners] at (4,-2+.2) (2323) {$\substack{tutu\\5}$};
\node[shape=rectangle,draw,rounded corners,fill=lightgray]  at (5,-2+.2) (23232) {$\substack{tutut\\6}$};
\node[shape=rectangle,draw,rounded corners] at (6,-2+.2) (232321) {$\substack{tututs\\7}$};
\node[shape=rectangle,draw,rounded corners,fill=lightgray]  at (7,-2+.2) (2323212) {$\substack{tututst\\11}$};
\node[shape=rectangle,draw,rounded corners] at (2-0.35,-2+0.55+.2) (a11) {\tiny{$\bullet$}};
\node[shape=rectangle,draw,rounded corners] at (2-0.35,-2+1.1+.2) (a12) {\tiny{$\bullet$}};
\node[shape=rectangle,draw,rounded corners] at (2,-2+1.45+.2) (a13) {\tiny{$\bullet$}};
\node[shape=rectangle,draw,rounded corners] at (2+0.35,-2+1.1+.2) (a14) {\tiny{$\bullet$}};
\node[shape=rectangle,draw,rounded corners] at (2+0.35,-2+0.55+.2) (a15) {\tiny{$\bullet$}};
\node[shape=rectangle,draw,rounded corners] at (3-0.35,-2+0.55+.2) (a21) {\tiny{$\bullet$}};
\node[shape=rectangle,draw,rounded corners] at (3-0.35,-2+1.1+.2) (a22) {\tiny{$\bullet$}};
\node[shape=rectangle,draw,rounded corners] at (3,-2+1.45+.2) (a23) {\tiny{$\bullet$}};
\node[shape=rectangle,draw,rounded corners] at (3+0.35,-2+1.1+.2) (a24) {\tiny{$\bullet$}};
\node[shape=rectangle,draw,rounded corners] at (3+0.35,-2+0.55+.2) (a25) {\tiny{$\bullet$}};
\node[shape=rectangle,draw,rounded corners] at (4-0.35,-2+0.55+.2) (a31) {\tiny{$\bullet$}};
\node[shape=rectangle,draw,rounded corners] at (4-0.35,-2+1.1+.2) (a32) {\tiny{$\bullet$}};
\node[shape=rectangle,draw,rounded corners] at (4,-2+1.45+.2) (a33) {\tiny{$\bullet$}};
\node[shape=rectangle,draw,rounded corners] at (4+0.35,-2+1.1+.2) (a34) {\tiny{$\bullet$}};
\node[shape=rectangle,draw,rounded corners] at (4+0.35,-2+0.55+.2) (a35) {\tiny{$\bullet$}};
\node[shape=rectangle,draw,rounded corners] at (5-0.35,-2+0.55+.2) (a41) {\tiny{$\bullet$}};
\node[shape=rectangle,draw,rounded corners] at (5-0.35,-2+1.1+.2) (a42) {\tiny{$\bullet$}};
\node[shape=rectangle,draw,rounded corners] at (5,-2+1.45+.2) (a43) {\tiny{$\bullet$}};
\node[shape=rectangle,draw,rounded corners] at (5+0.35,-2+1.1+.2) (a44) {\tiny{$\bullet$}};
\node[shape=rectangle,draw,rounded corners] at (5+0.35,-2+0.55+.2) (a45) {\tiny{$\bullet$}};
\node[shape=rectangle,draw,rounded corners] at (6-0.35,-2+0.55+.2) (a51) {\tiny{$\bullet$}};
\node[shape=rectangle,draw,rounded corners] at (6-0.35,-2+1.1+.2) (a52) {\tiny{$\bullet$}};
\node[shape=rectangle,draw,rounded corners] at (6,-2+1.45+.2) (a53) {\tiny{$\bullet$}};
\node[shape=rectangle,draw,rounded corners] at (6+0.35,-2+1.1+.2) (a54) {\tiny{$\bullet$}};
\node[shape=rectangle,draw,rounded corners] at (6+0.35,-2+0.55+.2) (a55) {\tiny{$\bullet$}};
\node[shape=rectangle,draw,rounded corners] at (7-0.35,-2+0.55+.2) (a61) {\tiny{$\bullet$}};
\node[shape=rectangle,draw,rounded corners] at (7-0.35,-2+1.1+.2) (a62) {\tiny{$\bullet$}};
\node[shape=rectangle,draw,rounded corners] at (7,-2+1.45+.2) (a63) {\tiny{$\bullet$}};
\node[shape=rectangle,draw,rounded corners] at (7+0.35,-2+1.1+.2) (a64) {\tiny{$\bullet$}};
\node[shape=rectangle,draw,rounded corners] at (7+0.35,-2+0.55+.2) (a65) {\tiny{$\bullet$}};
\node[shape=rectangle,draw,rounded corners] at (3-0.35,2-0.55-.2) (b21) {\tiny{$\bullet$}};
\node[shape=rectangle,draw,rounded corners] at (3-0.35,2-1.1-.2) (b22) {\tiny{$\bullet$}};
\node[shape=rectangle,draw,rounded corners] at (3,2-1.45-.2) (b23) {\tiny{$\bullet$}};
\node[shape=rectangle,draw,rounded corners] at (3+0.35,2-1.1-.2) (b24) {\tiny{$\bullet$}};
\node[shape=rectangle,draw,rounded corners] at (3+0.35,2-0.55-.2) (b25) {\tiny{$\bullet$}};
\node[shape=rectangle,draw,rounded corners] at (4-0.35,2-0.55-.2) (b31) {\tiny{$\bullet$}};
\node[shape=rectangle,draw,rounded corners] at (4-0.35,2-1.1-.2) (b32) {\tiny{$\bullet$}};
\node[shape=rectangle,draw,rounded corners] at (4,2-1.45-.2) (b33) {\tiny{$\bullet$}};
\node[shape=rectangle,draw,rounded corners] at (4+0.35,2-1.1-.2) (b34) {\tiny{$\bullet$}};
\node[shape=rectangle,draw,rounded corners] at (4+0.35,2-0.55-.2) (b35) {\tiny{$\bullet$}};
\node[shape=rectangle,draw,rounded corners] at (5-0.35,2-0.55-.2) (b41) {\tiny{$\bullet$}};
\node[shape=rectangle,draw,rounded corners] at (5-0.35,2-1.1-.2) (b42) {\tiny{$\bullet$}};
\node[shape=rectangle,draw,rounded corners] at (5,2-1.45-.2) (b43) {\tiny{$\bullet$}};
\node[shape=rectangle,draw,rounded corners] at (5+0.35,2-1.1-.2) (b44) {\tiny{$\bullet$}};
\node[shape=rectangle,draw,rounded corners] at (5+0.35,2-0.55-.2) (b45) {\tiny{$\bullet$}};
\node[shape=rectangle,draw,rounded corners] at (6-0.35,2-0.55-.2) (b51) {\tiny{$\bullet$}};
\node[shape=rectangle,draw,rounded corners] at (6-0.35,2-1.1-.2) (b52) {\tiny{$\bullet$}};
\node[shape=rectangle,draw,rounded corners] at (6,2-1.45-.2) (b53) {\tiny{$\bullet$}};
\node[shape=rectangle,draw,rounded corners] at (6+0.35,2-1.1-.2) (b54) {\tiny{$\bullet$}};
\node[shape=rectangle,draw,rounded corners] at (6+0.35,2-0.55-.2) (b55) {\tiny{$\bullet$}};
\node[shape=rectangle,draw,rounded corners] at (7-0.35,2-0.55-.2) (b61) {\tiny{$\bullet$}};
\node[shape=rectangle,draw,rounded corners] at (7-0.35,2-1.1-.2) (b62) {\tiny{$\bullet$}};
\node[shape=rectangle,draw,rounded corners] at (7,2-1.45-.2) (b63) {\tiny{$\bullet$}};
\node[shape=rectangle,draw,rounded corners] at (7+0.35,2-1.1-.2) (b64) {\tiny{$\bullet$}};
\node[shape=rectangle,draw,rounded corners] at (7+0.35,2-0.55-.2) (b65) {\tiny{$\bullet$}};
\node[shape=rectangle,draw,rounded corners] at (2-0.25,-2-0.55+.2) (c11) {\tiny{$\bullet$}};
\node[shape=rectangle,draw,rounded corners] at (2,-2-1+.2) (c12) {\tiny{$\bullet$}};
\node[shape=rectangle,draw,rounded corners] at (2+0.25,-2-0.55+.2) (c13) {\tiny{$\bullet$}};
\node[shape=rectangle,draw,rounded corners] at (3-0.25,-2-0.55+.2) (c21) {\tiny{$\bullet$}};
\node[shape=rectangle,draw,rounded corners] at (3,-2-1+.2) (c22) {\tiny{$\bullet$}};
\node[shape=rectangle,draw,rounded corners] at (3+0.25,-2-0.55+.2) (c23) {\tiny{$\bullet$}};
\node[shape=rectangle,draw,rounded corners] at (7-0.25,-2-0.55+.2) (c31) {\tiny{$\bullet$}};
\node[shape=rectangle,draw,rounded corners] at (7,-2-1+.2) (c32) {\tiny{$\bullet$}};
\node[shape=rectangle,draw,rounded corners] at (7+0.25,-2-0.55+.2) (c33) {\tiny{$\bullet$}};
\node[shape=rectangle,draw,rounded corners] at (2-0.25,2+0.55-.2) (d11) {\tiny{$\bullet$}};
\node[shape=rectangle,draw,rounded corners] at (2,2+1-.2) (d12) {\tiny{$\bullet$}};
\node[shape=rectangle,draw,rounded corners] at (2+0.25,2+0.55-.2) (d13) {\tiny{$\bullet$}};
\node[shape=rectangle,draw,rounded corners] at (3-0.25,2+0.55-.2) (d21) {\tiny{$\bullet$}};
\node[shape=rectangle,draw,rounded corners] at (3,2+1-.2) (d22) {\tiny{$\bullet$}};
\node[shape=rectangle,draw,rounded corners] at (3+0.25,2+0.55-.2) (d23) {\tiny{$\bullet$}};
\node[shape=rectangle,draw,rounded corners] at (4-0.25,2+0.55-.2) (d31) {\tiny{$\bullet$}};
\node[shape=rectangle,draw,rounded corners] at (4,2+1-.2) (d32) {\tiny{$\bullet$}};
\node[shape=rectangle,draw,rounded corners] at (4+0.25,2+0.55-.2) (d33) {\tiny{$\bullet$}};
\node[shape=rectangle,draw,rounded corners] at (5-0.25,2+0.55-.2) (d41) {\tiny{$\bullet$}};
\node[shape=rectangle,draw,rounded corners] at (5,2+1-.2) (d42) {\tiny{$\bullet$}};
\node[shape=rectangle,draw,rounded corners] at (5+0.25,2+0.55-.2) (d43) {\tiny{$\bullet$}};
\draw[-latex,thick,blue] (e)--(2);
\draw[-latex,thick] (2) to[out=90,in=180] (21);
\draw[-latex,thick,red] (2) to[out=-90,in=180] (23);
\draw[-latex,thick,blue] (21)--(212);
\draw[-latex,thick,red] (212)--(2123);
\draw[-latex,thick,blue] (2123)--(21232);
\draw[-latex,thick,red] (21232)--(212323);
\draw[-latex,thick,blue] (212323)--(2123232);
\draw[-latex,thick,blue] (23)--(232);
\draw[-latex,thick,red] (232)--(2323);
\draw[-latex,thick,blue] (2323)--(23232);
\draw[-latex,thick] (23232)--(232321);
\draw[-latex,thick,blue] (232321)--(2323212);
\draw[-latex,thick,blue] (23)--(a11);
\draw[-latex,thick,red] (a11)--(a12);
\draw[-latex,thick,blue] (a12)--(a13);
\draw[-latex,thick] (a13)--(a14);
\draw[-latex,thick,blue] (a14)--(a15);
\draw[-latex,thick,red] (a15)--(23);
\draw[-latex,thick,red] (232)--(a21);
\draw[-latex,thick,blue] (a21)--(a22);
\draw[-latex,thick] (a22)--(a23);
\draw[-latex,thick,blue] (a23)--(a24);
\draw[-latex,thick,red] (a24)--(a25);
\draw[-latex,thick,blue] (a25)--(232);
\draw[-latex,thick,blue] (2323)--(a31);
\draw[-latex,thick,black] (a31)--(a32);
\draw[-latex,thick,blue] (a32)--(a33);
\draw[-latex,thick,red] (a33)--(a34);
\draw[-latex,thick,blue] (a34)--(a35);
\draw[-latex,thick,red] (a35)--(2323);
\draw[-latex,thick,black] (23232)--(a41);
\draw[-latex,thick,blue] (a41)--(a42);
\draw[-latex,thick,red] (a42)--(a43);
\draw[-latex,thick,blue] (a43)--(a44);
\draw[-latex,thick,red] (a44)--(a45);
\draw[-latex,thick,blue] (a45)--(23232);
\draw[-latex,thick,blue] (232321)--(a51);
\draw[-latex,thick,red] (a51)--(a52);
\draw[-latex,thick,blue] (a52)--(a53);
\draw[-latex,thick,red] (a53)--(a54);
\draw[-latex,thick,blue] (a54)--(a55);
\draw[-latex,thick,black] (a55)--(232321);
\draw[-latex,thick,red] (2323212)--(a61);
\draw[-latex,thick,blue] (a61)--(a62);
\draw[-latex,thick,red] (a62)--(a63);
\draw[-latex,thick,blue] (a63)--(a64);
\draw[-latex,thick,black] (a64)--(a65);
\draw[-latex,thick,blue] (a65)--(2323212);
\draw[-latex,thick,red] (212)--(b21);
\draw[-latex,thick,blue] (b21)--(b22);
\draw[-latex,thick,red] (b22)--(b23);
\draw[-latex,thick,blue] (b23)--(b24);
\draw[-latex,thick,black] (b24)--(b25);
\draw[-latex,thick,blue] (b25)--(212);
\draw[-latex,thick,blue] (2123)--(b31);
\draw[-latex,thick,red] (b31)--(b32);
\draw[-latex,thick,blue] (b32)--(b33);
\draw[-latex,thick,black] (b33)--(b34);
\draw[-latex,thick,blue] (b34)--(b35);
\draw[-latex,thick,red] (b35)--(2123);
\draw[-latex,thick,red] (21232)--(b41);
\draw[-latex,thick,blue] (b41)--(b42);
\draw[-latex,thick,black] (b42)--(b43);
\draw[-latex,thick,blue] (b43)--(b44);
\draw[-latex,thick,red] (b44)--(b45);
\draw[-latex,thick,blue] (b45)--(21232);
\draw[-latex,thick,blue] (212323)--(b51);
\draw[-latex,thick,black] (b51)--(b52);
\draw[-latex,thick,blue] (b52)--(b53);
\draw[-latex,thick,red] (b53)--(b54);
\draw[-latex,thick,blue] (b54)--(b55);
\draw[-latex,thick,red] (b55)--(212323);
\draw[-latex,thick,black] (2123232)--(b61);
\draw[-latex,thick,blue] (b61)--(b62);
\draw[-latex,thick,red] (b62)--(b63);
\draw[-latex,thick,blue] (b63)--(b64);
\draw[-latex,thick,red] (b64)--(b65);
\draw[-latex,thick,blue] (b65)--(2123232);
\draw[-latex,thick,blue] (23)--(c11);
\draw[-latex,thick,black] (c11)--(c12);
\draw[-latex,thick,blue] (c12)--(c13);
\draw[-latex,thick,red] (c13)--(23);
\draw[-latex,thick,black] (232)--(c21);
\draw[-latex,thick,blue] (c21)--(c22);
\draw[-latex,thick,red] (c22)--(c23);
\draw[-latex,thick,blue] (c23)--(232);
\draw[-latex,thick,red] (2323212)--(c31);
\draw[-latex,thick,blue] (c31)--(c32);
\draw[-latex,thick,black] (c32)--(c33);
\draw[-latex,thick,blue] (c33)--(2323212);
\draw[-latex,thick,blue] (21)--(d11);
\draw[-latex,thick,red] (d11)--(d12);
\draw[-latex,thick,blue] (d12)--(d13);
\draw[-latex,thick,black] (d13)--(21);
\draw[-latex,thick,red] (212)--(d21);
\draw[-latex,thick,blue] (d21)--(d22);
\draw[-latex,thick,black] (d22)--(d23);
\draw[-latex,thick,blue] (d23)--(212);
\draw[-latex,thick,blue] (2123)--(d31);
\draw[-latex,thick,black] (d31)--(d32);
\draw[-latex,thick,blue] (d32)--(d33);
\draw[-latex,thick,red] (d33)--(2123);
\draw[-latex,thick,black] (21232)--(d41);
\draw[-latex,thick,blue] (d41)--(d42);
\draw[-latex,thick,red] (d42)--(d43);
\draw[-latex,thick,blue] (d43)--(21232);
\end{tikzpicture}
\caption{An automaton recognising $\Gamma_{W,S}(\varphi_3)$}\label{fig:246a}
\end{figure}
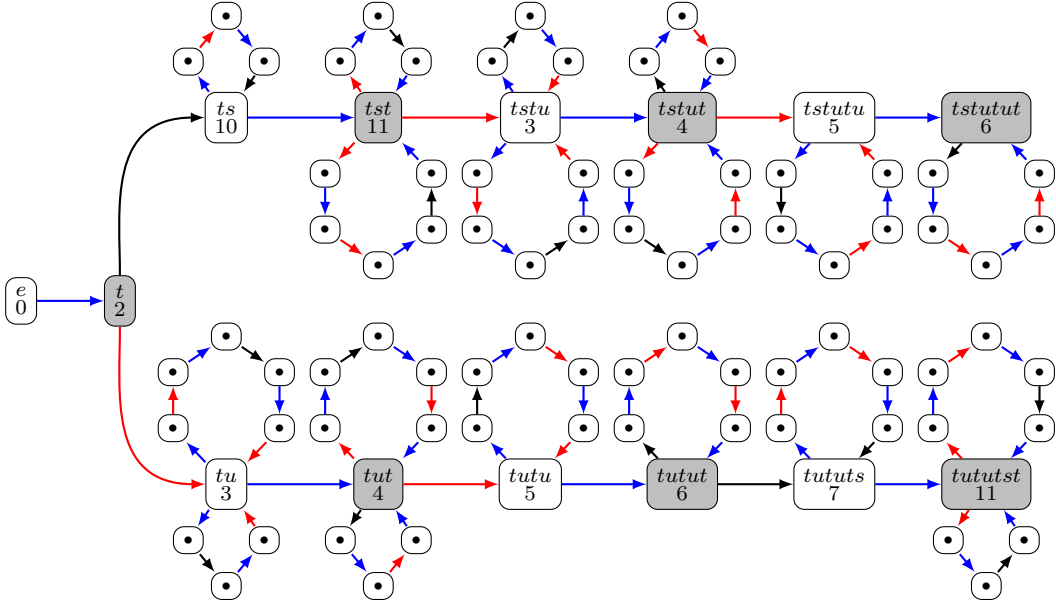

\noindent Some simplifications are possible in the automaton (c.f. Example~\ref{ex:infinitedihedral}). For example, the circuits based at the non-accept states can be removed (without altering the accepted language). 
\end{example}

\begin{example}[The group $\Delta(2,3,2m)$]\label{ex:232m}
Let $3\leq m<\infty$ and consider the triangle group $\Delta(2,3,2m)$. By Proposition~\ref{prop:wf} the weight functions on $\Delta(2,3,2m)$ are determined by the choices $\varphi(s)=\varphi(t)=a$ and $\varphi(u)=b$ with $a,b\in\RR$. The minimal shortlex automaton $\cAlex$ (using the order $s<t<u$) can be computed using Proposition~\ref{prop:conetype} and Theorem~\ref{thm:minimalshortlex}. The automaton is illustrated in Figure~\ref{fig:232m}, where black, blue, and red arrows indicate $s$, $t$, and $u$ transitions respectively, $0$ is the start state, and all states are accept states (the states are denoted $0,1,\ldots,2m+1,x,y,z,w$, and in this example we do not encircle the states for typesetting reasons). 
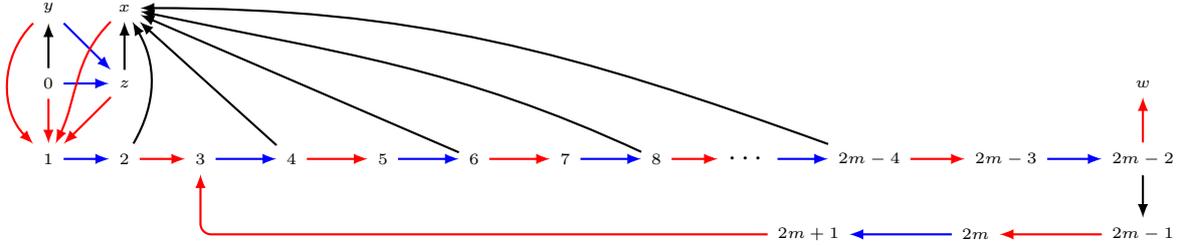
\begin{figure}[H]
\centering
\begin{tikzpicture} [scale=1]
\node at (0,0) (e) {\tiny{$0$}};
\node at (0,1) (1) {\tiny{$y$}};
\node at (1,0) (2) {\tiny{$z$}};
\node at (1,1) (21) {\tiny{$x$}};
\node at (0,-1) (3) {\tiny{$1$}};
\node at (1,-1) (32) {\tiny{$2$}};
\node at (2,-1) (323) {\tiny{$3$}};
\node at (3.2,-1) (3232) {\tiny{$4$}};
\node at (4.4,-1) (32323) {\tiny{$5$}};
\node at (5.6,-1) (323232) {\tiny{$6$}};
\node at (6.8,-1) (3232323) {\tiny{$7$}};
\node at (8,-1) (32323232) {\tiny{$8$}};
\node at (9.2,-1) (y) {$\cdots$};
\node at (10.8,-1) (xp) {\tiny{$2m-4$}};
\node at (12.6,-1) (xp3) {\tiny{$2m-3$}};
\node at (14.4,-1) (x) {\tiny{$2m-2$}};
\node at (14.4,0) (x3) {\tiny{$w$}};
\node at (14.4,-2) (x1) {\tiny{$2m-1$}};
\node at (12.2,-2) (x13) {\tiny{$2m$}};
\node at (10,-2) (x132) {\tiny{$2m+1$}};
\draw[-latex,thick] (e)--(1);
\draw[-latex,thick,blue]
(e)--(2);
\draw[-latex,thick,red]
(e)--(3);
\draw[-latex,thick,blue]
(1)--(2);
\draw[-latex,thick,red]
(1) to[out=225,in=135] (3);
\draw[-latex,thick] (2)--(21);
\draw[-latex,thick,red]
(2)--(3);
\draw[-latex,thick,red]
(21) to[out=-135,in=65] (3);
\draw[-latex,thick,blue]
(3)--(32);
\draw[-latex,thick,red]
(32)--(323);
\draw[-latex,thick,blue]
(323)--(3232);
\draw[-latex,thick,red]
(3232)--(32323);
\draw[-latex,thick,blue]
(32323)--(323232);
\draw[-latex,thick,red]
(323232)--(3232323);
\draw[-latex,thick,blue]
(3232323)--(32323232);
\draw[-latex,thick,red]
(32323232)--(y);
\draw[-latex,thick,blue]
(y)--(xp);
\draw[-latex,thick,red]
(xp)--(xp3);
\draw[-latex,thick,blue]
(xp3)--(x);
\draw[-latex,thick,red]
(x)--(x3);
\draw[-latex,thick] (x)--(x1);
\draw[-latex,thick,red]
(x1)--(x13);
\draw[-latex,thick,blue]
(x13)--(x132);
\draw[-latex,thick,red,rounded corners]
(x132)--(2,-2)--(323);
\draw[-latex,thick,thick] (32) to[out=60,in=-60] (21);
\draw[-latex,thick] (32323232) to[out=155,in=-12] (21);
\draw[-latex,thick] (xp) to[out=160,in=0] (21);
\draw[-latex,thick] (3232)--(21);
\draw[-latex,thick] (323232)--(21);
\end{tikzpicture}
\caption{The minimal shortlex automata for $\Delta(2,3,2m)$, $m\geq 3$}\label{fig:232m}
\end{figure}

\noindent By inspection the simple circuits are 
$
1\to 2\to 3\to\cdots \to 2j\to x\to 1
$ (with $2\leq 2j\leq 2m-4$) and $3\to 4\to\cdots\to 2m-1\to 2m\to 2m+1\to 3$, and it follows from Corollary~\ref{cor1:groups} that $\varphi$ is bounded if and only if $(i+1)a+ib\leq 0$ for all $1\leq i\leq m-1$. The inequalities with $1<i<m-1$ are redundant, and therefore $\varphi$ is bounded if and only if
$$2a+b\leq 0\quad\text{and}\quad ma+(m-1)b\leq 0.
$$ 
By inspection, $\CircFree(\cLlex)$ consists of the words of the form
\begin{align*}
&\{\varnothing,s,t,st,sts\}\cdot [ut]_j&&\text{with $0\leq j\leq 2m-1$, or}\\
&\{\varnothing,s,t,st\}\cdot [ut]_{2i}\cdot s&&\text{with $1\leq i\leq m-2$, or}\\
&\{\varnothing,s,t,st,sts\}\cdot [ut]_{2m-2}\cdot\{s,su,sut\}
\end{align*}
where $[ut]_j=utut\cdots$ with $j$ terms. By directly considering the values of a bounded weight function $\varphi$ on this finite set we obtain
$
\boundW=\max\{0,a,2a,3a,a+b,ia+(i+1)b\mid 0\leq i\leq m-1\}.
$
Since $2a+b\leq 0$ and $ma+(m-1)b\leq 0$ there is some redundancy, and it follows that
$$
\boundW=\max\{0,3a,b,(m-1)a+mb\}.
$$
An example of calculating $\cA_{\varphi}$ is given in Example~\ref{ex:246}, and we omit such an example here. 
\end{example}

\begin{example}\label{ex:333b}
Consider the triangle group $W=\Delta(3,3,3)$ (an affine Coxeter group of type~$\tilde{\mathsf{A}}_2$). The automaton $\cAlex$ is given in Example~\ref{ex:333a}. By Proposition~\ref{prop:wf} the only weight functions on $W$ are the functions $\varphi(g)=\lambda \ell(g)$ (and such a function is bounded if and only if $\lambda\leq 0$). Therefore Corollary~\ref{cor1:groups} gives no information. However we note that working with weight functions on the exact geodesic language~$\cLlex$ (rather than weight functions on the group) gives richer information as weight functions then have $3$ degrees of freedom. For example, following from Example~\ref{ex:333a}, the weight function $\varphi:S^*\to \RR$ with $\varphi(s)=0$, $\varphi(t)=1$ and $\varphi(u)=-1$ is bounded with bound~$1$. It follows that for every $w\in\cLlex$ we have
$
|w_t|\leq 1+|w_u|
$
(note that this bound does not hold in the language $\cL(W,S)$ of all reduced expressions, for example consider $tutst$).
\end{example}

A striking feature of Examples~\ref{ex:246} and~\ref{ex:232m} is that for the triangle groups $\Delta(2,3,2m)$ and $\Delta(2,4,6)$ the bound $\boundW$ of a bounded weight function is always attained on an element of a spherical parabolic subgroup. We ask the following question.

\begin{question}\label{q:spherical}
Let $\varphi$ be a bounded weight function on a Coxeter system $(W,S)$. Let $\mathsf{Sph}(W)$ denote the union of all spherical parabolic subgroups $W_J$, with $J\subseteq S$. Is it true that there exists $x\in \mathsf{Sph}(W)$ such that $\varphi(x)=\boundW$?
\end{question}

If the answer to Question~\ref{q:spherical} is affirmative, then the possible values of $\boundW$ are severely restricted (see the following section).
%
%

\subsection{Spherical Coxeter groups}

All weight functions on a spherical Coxeter group are bounded, and the statements in Corollary~\ref{cor1:groups} are trivial (for example, every finite set is regular). However in the spherical case one can ask more precise questions. In this section we explicitly determine the bound $\boundW$ and the cell $\cellW$ of a (necessarily bounded) weight function on a spherical Coxeter system. 

It is clear that if $\varphi(s)\leq 0$ for all $s\in S$ then $\boundW=0$ and $\cellW=W_{\varphi}^0$ (see Proposition~\ref{prop:basic1}). Thus we can assume that $\varphi(s)>0$ for some $s\in S$. The following lemma deals with the case $\varphi(s)\geq 0$ for all $s\in S$ (in the spherical case).

\begin{lemma}\label{lem:positivesphericalcase}
If $(W,S)$ is spherical and $\varphi(s)\geq 0$ for all $s\in S$ then $\boundW=\varphi(\sw_0)$ and $\cellW=\sw_0 W_{\varphi}^0$.
\end{lemma}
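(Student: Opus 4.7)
The plan is to leverage two standard facts about the longest element in a finite Coxeter group: firstly, for every $w\in W$ the factorisation $\sw_0 = w\cdot(w^{-1}\sw_0)$ is length-additive, that is $\ell(\sw_0)=\ell(w)+\ell(w^{-1}\sw_0)$, and symmetrically $\ell(\sw_0 x)=\ell(\sw_0)-\ell(x)$ for all $x\in W$; secondly, every element of a standard parabolic subgroup $W_J=\langle J\rangle$ has all of its reduced expressions drawn entirely from~$J$. Combined with the elementary observation that $\varphi(x)=\sum_i\varphi(s_i)$ for any reduced expression $s_1\cdots s_k$ of~$x$ (and consequently $\varphi(x)=\varphi(x^{-1})$), these let us compute $\varphi$ on any group element directly from the generator weights.

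For the bound, I would apply the length-additive factorisation $\sw_0 = w\cdot(w^{-1}\sw_0)$ and the weight-function property to obtain $\varphi(\sw_0)=\varphi(w)+\varphi(w^{-1}\sw_0)$. The second summand is a sum of values $\varphi(s)$, each non-negative by hypothesis, so $\varphi(w)\leq\varphi(\sw_0)$ for every $w\in W$. As $\sw_0\in W$ this bound is attained, giving $\boundW=\varphi(\sw_0)$.

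For the cell I would prove both inclusions. Given $x\in W_\varphi^0$, the factorisation $\sw_0=(\sw_0 x)\cdot x^{-1}$ is again length-additive, whence $\varphi(\sw_0)=\varphi(\sw_0 x)+\varphi(x^{-1})$; but $x\in W_\varphi^0=\langle S_\varphi^0\rangle$ has all reduced expressions in $S_\varphi^0$, so $\varphi(x^{-1})=\varphi(x)=0$ and hence $\varphi(\sw_0 x)=\varphi(\sw_0)=\boundW$, showing $\sw_0 W_\varphi^0\subseteq\cellW$. Conversely, for $y\in\cellW$ the factorisation $\sw_0=y\cdot(y^{-1}\sw_0)$ yields $\varphi(y^{-1}\sw_0)=\varphi(\sw_0)-\varphi(y)=0$; since every $\varphi(s)\geq 0$, this forces every letter in any reduced expression for $y^{-1}\sw_0$ to satisfy $\varphi(s)=0$, so $y^{-1}\sw_0\in W_\varphi^0$, and thus $y\in \sw_0 W_\varphi^0$ (using that $\sw_0$ is an involution and $W_\varphi^0$ is closed under inversion).

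The argument is essentially bookkeeping once the Coxeter-theoretic inputs are in place; the crucial mechanism is the implication ``a reduced expression of total weight zero uses only weight-zero generators'', which is precisely where the positivity hypothesis $\varphi(s)\geq 0$ is used. No step appears to pose a genuine obstacle, so I do not anticipate a hard case.
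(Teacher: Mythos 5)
Your proposal is correct and follows essentially the same route as the paper: both arguments rest on the length-additive factorisation $\sw_0=x\cdot(x^{-1}\sw_0)$ to get $\varphi(x)=\varphi(\sw_0)-\varphi(x^{-1}\sw_0)\leq\varphi(\sw_0)$, and both identify the cell by observing that $\varphi(x^{-1}\sw_0)=0$ together with $\varphi(s)\geq 0$ forces $x^{-1}\sw_0\in W_{\varphi}^0$. The only difference is that you spell out the ``weight-zero element uses only weight-zero generators'' step, which the paper leaves implicit.
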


\begin{proof}
For all $x\in W$ we have $\sw_0=x(x^{-1}\sw_0)$ with $\ell(\sw_0)=\ell(x)+\ell(x^{-1}\sw_0)$, and hence $\varphi(x)=\varphi(\sw_0)-\varphi(x^{-1}\sw_0)\leq \varphi(\sw_0)$ (because $\varphi(x^{-1}\sw_0)\geq 0$). Thus $\boundW=\varphi(\sw_0)$. If $x\in \cellW$ then $\varphi(x)=\varphi(\sw_0)$ and so $\varphi(x^{-1}\sw_0)=0$. Thus $x^{-1}\sw_0\in W_{\varphi}^0$, and hence $x\in \sw_0 W_{\varphi}^0$. Conversely, if $x\in \sw_0 W_{\varphi}^0$ then $x=\sw_0y$ with $y\in W_{\varphi}^0$ and $\ell(x)=\ell(\sw_0)-\ell(y)$, and so $xy^{-1}=\sw_0$ with $\ell(x)+\ell(y^{-1})=\ell(\sw_0)$. Thus $\varphi(x)=\varphi(\sw_0)$, and so $x\in\cellW$. 
\end{proof}

Thus it remains to consider the case $\varphi(s)<0$ and $\varphi(t)>0$ for some $s,t\in S$. By Proposition~\ref{prop:wf}, in the cases $\sA_n$, $\sD_n$, $\sE_6$, $\sE_7$, $\sE_8$, $\sH_3$, $\sH_4$ and $\sI_2(2m+1)$, every weight function is constant on the generators, and so it remains to consider types $\sB_n$, $\sF_4$, and $\sI_2(2m)$. 

The case of dihedral groups $\sI_2(2m)$ is elementary, and we omit the simple proof of the following theorem.

\begin{thm}
Let $W=\langle s,t\mid s^2=t^2=(st)^{2m}=1\rangle$ be a dihedral group of order $4m$ and let $\varphi$ be a weight function. Let $\varphi(s)=a$ and $\varphi(t)=b$, and assume that $a<0$ and $b>0$. Then 
\begin{align*}
\boundW&=\begin{cases}
b&\text{if $a+b\leq 0$}\\
(m-1)a+mb&\text{if $a+b\geq 0$}
\end{cases}\\
\cellW&=\begin{cases}
\{t\}&\text{if $a+b<0$}\\
\{t(st)^k\mid 0\leq k\leq m-1\}&\text{if $a+b=0$}\\
\{\sw_0s\}&\text{if $a+b>0$}.
\end{cases}
\end{align*}
\end{thm}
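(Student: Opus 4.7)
The plan is to give a direct, elementary argument by enumerating the elements of $W$ and computing $\varphi$ on each. Since $W$ is the dihedral group of order $4m$, every non-identity element admits a unique reduced expression as an alternating word in $s, t$, with the sole exception of the longest element~$\sw_0$, which admits the two reduced expressions $(st)^m = (ts)^m$. Consequently, for each $0 \le k < 2m$ there are exactly two elements of length~$k$ (one starting with each generator, reducing to just $e$ for $k=0$), plus the single element $\sw_0$ of length~$2m$. In particular $\varphi$ is well-defined on each element by summing over any reduced expression.

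First I would tabulate the values of $\varphi$. An element of length $k$ starting with $t$ contains $\lceil k/2 \rceil$ copies of $t$ and $\lfloor k/2 \rfloor$ copies of $s$, hence has value $\lfloor k/2\rfloor a + \lceil k/2 \rceil b$; analogously, length~$k$ starting with $s$ gives $\lceil k/2 \rceil a + \lfloor k/2 \rfloor b$. For even~$k$ both values agree, while for odd~$k$ the element starting with $t$ exceeds the one starting with $s$ by $b-a > 0$, so the latter is never a candidate for the maximum. Reparametrizing by $j$, the candidates for the maximum reduce to the two families $j(a+b)$ for $0 \le j \le m$ (the even-length elements, with $j=m$ giving $\sw_0$) and $j(a+b)+b$ for $0 \le j \le m-1$ (the odd-length elements starting with~$t$, namely $t(st)^j$).

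The proof then splits into three cases according to the sign of $a+b$. If $a+b<0$, both sequences are strictly decreasing in~$j$, giving respective maxima $0$ (at $e$) and $b$ (at~$t$); since $b>0$ we conclude $\boundW = b$ and $\cellW = \{t\}$. If $a+b=0$, the first sequence is identically~$0$ and the second identically~$b$, yielding $\boundW = b$ attained at each $t(st)^j$ with $0 \le j \le m-1$; here one must also verify that $\sw_0$, with value $m(a+b)=0 < b$, is excluded from the cell. If $a+b>0$, both sequences are strictly increasing in~$j$, with respective maxima $m(a+b)$ (at~$\sw_0$) and $(m-1)a+mb$ (at $t(st)^{m-1}$); the difference $-a > 0$ shows the latter strictly dominates, so $\boundW = (m-1)a+mb$ and the maximum is uniquely attained. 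A short manipulation $(ts)^m s = (ts)^{m-1}t$ identifies $t(st)^{m-1}$ with $\sw_0 s$, giving the stated form of the cell.

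There is no genuine obstacle here; the entire argument is a finite bookkeeping exercise over the explicit list of elements. The only point requiring mild care is the $a+b=0$ case, where one must be careful to exclude $\sw_0$ from the cell and to include every $t(st)^j$ with $j \le m-1$.
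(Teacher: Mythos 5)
Your argument is correct and complete: the enumeration of elements by length and leading letter, the reduction to the two families $j(a+b)$ and $j(a+b)+b$, the case analysis on the sign of $a+b$, and the identification $\sw_0 s = t(st)^{m-1}$ all check out. The paper explicitly omits the proof as ``elementary,'' and your direct bookkeeping over the $4m$ elements is precisely the kind of argument the authors had in mind, so there is nothing to compare beyond noting that you have supplied the omitted details correctly.
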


We now turn attention to the $\sB_n$ case. We label the generators $s_1,\ldots,s_n$ with $(s_{n-1}s_n)^4=1$ (Bourbaki~\cite{Bou:02} conventions). 
Let $\varphi(s_1)=\cdots=\varphi(s_{n-1})=a$ and $\varphi(s_n)=b$.

\begin{thm}\label{thm:Bn} Let $\varphi$ be a weight function on $\mathsf{B}_n$ and let $\varphi(s_1)=a$ and $\varphi(s_n)=b$. Then
$$
\boundW=\max\bigg\{\frac{i(i-1)}{2}a+ib,\,\left(n(n-1)-\frac{(n-i)(n-i-1)}{2}\right)a+ib\,\bigg| \,0\leq i\leq n\bigg\},
$$
and if $a,b\neq 0$ then $\cellW= \{x\in X\mid \varphi(x)=\boundW\}$ where $X=\{x_i,y_i\mid 0\leq i\leq n\}$, where
$$
x_i=\prod_{j=1}^{i}(s_ns_{n-1}\cdots s_{n-i+j})\quad\text{and}\quad y_i=\sw_{S\backslash\{s_n\}}\prod_{j=1}^{i}(s_ns_{n-1}\cdots s_{j}).
$$
\end{thm}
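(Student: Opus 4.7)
The plan is to identify $W = W(\sB_n)$ with the group of signed permutations of $\{\pm 1, \ldots, \pm n\}$, with $s_i$ (for $i<n$) swapping the values $i\leftrightarrow i+1$ and $s_n$ negating the value~$n$. Since the Coxeter graph of $\sB_n$ is a tree, the count $k=|w|_{s_n}$ of times $s_n$ appears in any reduced expression for $w$ is well-defined, and equals the number of indices $i$ with $w(i)<0$. Setting $W_k = \{w\in W : |w|_{s_n}=k\}$ we have $\varphi(w) = a(\ell(w)-k)+bk$, so the problem reduces to the length extrema on each~$W_k$:
\[
\boundW = \max_{0\le k\le n}\max_{w\in W_k}\varphi(w).
\]

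The core combinatorial claim is: for each $0\le k\le n$, the minimum of $\ell$ on $W_k$ equals $\frac{k(k+1)}{2}$ and is attained uniquely by $x_k$, and the maximum equals $n^2-\frac{(n-k)(n-k+1)}{2}$ and is attained uniquely by $y_k$. First I would verify that $x_k$ is the signed permutation $(1,2,\ldots,n-k,-n,-(n-1),\ldots,-(n-k+1))$ by direct action on the standard basis. Then for an arbitrary $w\in W_k$ I would use the expression for $\ell(w)$ as the number of positive roots of $\sB_n$ sent to negative roots: with Bourbaki positive roots $e_i$ ($1\le i\le n$) and $e_i\pm e_j$ ($1\le i<j\le n$), the $k$ roots $e_i$ with $w(i)<0$ and the $\binom{k}{2}$ roots $e_i+e_j$ (those with both $w(i),w(j)<0$) are always sent to negative roots, yielding $\ell(w)\ge\frac{k(k+1)}{2}$. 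Analysing when equality holds pins down $w=x_k$: the negative positions must form the final block $\{n-k+1,\ldots,n\}$ (else some $e_i-e_j$ with $w(i)<0<w(j)$ and $i<j$ contributes), the positive entries must equal $1,2,\ldots,n-k$ in increasing order (else a root $e_i-e_j$ between positives or a root $e_i+e_j$ with $w(i)$ positive and $|w(j)|<w(i)$ contributes), and the absolute values of the negative entries must strictly decrease from left to right (else some $e_i-e_j$ between two negatives contributes). The maximum claim is then obtained by $\sw_0$-duality: since $\sw_0=(-1,-2,\ldots,-n)$ acts on signed permutations by negating every entry, the map $w\mapsto \sw_0 w$ is a bijection $W_k\to W_{n-k}$ satisfying $\ell(\sw_0 w)=n^2-\ell(w)$, so the unique $\ell$-maximizer of $W_k$ is $\sw_0 x_{n-k}$ with length $n^2-\frac{(n-k)(n-k+1)}{2}$; a direct one-line comparison confirms $\sw_0 x_{n-k}=y_k$ (both equal $(-1,-2,\ldots,-k,n,n-1,\ldots,k+1)$).

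Given the core claim the remainder is bookkeeping. For each $k$, since $\varphi(w)=a\ell(w)+(b-a)k$, the maximum of $\varphi$ on $W_k$ is $\varphi(y_k)$ when $a\ge 0$ and $\varphi(x_k)$ when $a\le 0$, and so in either case lies in $\{\varphi(x_k),\varphi(y_k)\}$; taking the maximum over $k$ yields the stated formula for $\boundW$. When $a,b\neq 0$ the inner maximum on each $W_k$ is strictly attained at a unique element ($x_k$ or $y_k$), so any $w\in\cellW$ with $|w|_{s_n}=k$ must equal the corresponding member of $X$, giving $\cellW=\{x\in X:\varphi(x)=\boundW\}$. The main obstacle is the equality analysis for $\ell(w)\ge\frac{k(k+1)}{2}$: one must enumerate all remaining potential contributions of positive roots sent to negative roots and check that each vanishing condition forces one of the three structural properties of $x_k$ listed above. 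This root-theoretic case analysis is the technical heart of the proof, and the $\sw_0$-duality $y_k=\sw_0 x_{n-k}$ is what lets the maximum statement be deduced from the minimum one without a separate enumeration.
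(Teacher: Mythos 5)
Your proposal is correct and follows essentially the same route as the paper: your level sets $W_k$ of the statistic $k=|w|_{s_n}$ are precisely the $(W_J,W_J)$-double cosets with $J=S\setminus\{s_n\}$, and both arguments reduce to identifying $x_k$ and $y_k$ as the unique minimal/maximal length elements of these cosets (the paper invokes Proposition~\ref{prop:basic1} plus an omitted cardinality count where you use root counting and $\sw_0$-duality, so you are mostly supplying details the paper elides). One small repair: the well-definedness of $|w|_{s_n}$ across reduced expressions is not a consequence of the Coxeter graph being a tree (type $\sA_2$ is a counterexample), but of all $m_{s_i,s_n}$ being even --- equivalently, of the existence via Proposition~\ref{prop:wf} of the weight function taking value $1$ on $s_n$ and $0$ on the other generators.
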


\begin{proof} Let $J=S\backslash\{s_n\}$. From Proposition~\ref{prop:basic1}, if $b>0$ and $a<0$ (respectively $b<0$ and $a>0$) then the bound occurs on a minimal (respectively maximal) length $(W_J,W_J)$-double coset representative. These minimal (respectively maximal) length $(W_J,W_J)$-double coset representatives are $x_i$ (respectively $y_i$) for $0\leq i\leq n$. To see this, it is clear that $x_i$ (respectively $y_i$) is of minimal (respectively maximal) length in its $(W_J,W_J)$-double coset, and to check that we have found all minimal/maximal length representatives one can compute the cardinalities of the double cosets and verify that they cover~$W$; we omit the details. The result follows (note that $x_0=e$ and $y_n=\sw_0$, dealing with the cases $a,b\leq 0$ and $a,b\geq 0$).
\end{proof}

We now turn to the group $\sF_4$, with labelling
\begin{center}
\begin{tikzpicture}[scale=0.75,baseline=-0.5ex]
\node [inner sep=0.8pt,outer sep=0.8pt] at (3,0) (3) {\Large{$\bullet$}};
\node [inner sep=0.8pt,outer sep=0.8pt] at (4,0) (4) {\Large{$\bullet$}};
\node [inner sep=0.8pt,outer sep=0.8pt] at (5,0) (5) {\Large{$\bullet$}};
\node [inner sep=0.8pt,outer sep=0.8pt] at (6,0) (6) {\Large{$\bullet$}};
\node at (3,-0.5) {$s_1$};
\node at (4,-0.5) {$s_2$};
\node at (5,-0.5) {$s_3$};
\node at (6,-0.5) {$s_4$};
\node at (4.5,0.3) {$4$};
\draw (3,0)--(4,0);
\draw (4,0)--(5,0);
\draw (5,0)--(6,0);
\end{tikzpicture}
\end{center}
By Proposition~\ref{prop:wf} every weight function $\varphi$ has $\varphi(s_1)=\varphi(s_2)$ and $\varphi(s_3)=\varphi(s_4)$. 

\begin{lemma}\label{lem:cosetsF4}
The elements of $W$ that are both maximal length $(W_{\{s_1,s_2\}},W_{\{s_1,s_2\}})$-double coset representatives, and minimal length $(W_{\{s_3,s_4\}},W_{\{s_3,s_4\}})$-double coset representatives are $121$, $121321$, $12132132$, $1213214321$, $121321432132$, $121323432132$, $121321324321$, $12132132432132$, $1213214321324321$, $121321324321324321$, and $121321324321323432132$. 
\end{lemma}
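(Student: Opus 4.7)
The plan is to reformulate the two double-coset conditions as a single condition on descent sets, and then carry out a finite enumeration.

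Set $J=\{s_1,s_2\}$ and $K=\{s_3,s_4\}$, so that $S=J\sqcup K$. For $w\in W$, write $D_L(w)=\{s\in S\mid \ell(sw)<\ell(w)\}$ and $D_R(w)=\{s\in S\mid \ell(ws)<\ell(w)\}$. A standard fact about parabolic double cosets in a finite Coxeter group is that $w$ has maximal length in its $(W_J,W_J)$-double coset if and only if $J\subseteq D_L(w)\cap D_R(w)$, and $w$ has minimal length in its $(W_K,W_K)$-double coset if and only if $K\cap(D_L(w)\cup D_R(w))=\varnothing$. Since $J$ and $K$ partition $S$, these two conditions combined are equivalent to the single condition $D_L(w)=D_R(w)=J$.

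The lemma is therefore equivalent to the claim that the $11$ listed reduced words represent exactly the elements of $W=\sF_4$ with both left and right descent sets equal to $\{s_1,s_2\}$. Since $|W|=1152$, this is a finite check. First I would verify directly (via the exchange condition, or by walking in the minimal shortlex automaton of Theorem~\ref{thm:minimalshortlex}) that each of the $11$ listed words is reduced and has $D_L=D_R=\{s_1,s_2\}$. For completeness, the parabolic factorization yields that any such $w$ admits a unique decomposition $w=\sw_J u$ with $u$ of minimum length in its right coset $W_J w$, giving $\ell(w)=3+\ell(u)$; the additional requirement $D_L(w)\cap K=\varnothing$, together with the analogous constraints on the right-hand factorization $w=v\sw_J$, restricts the candidates for $u$ (respectively $v$) to those corresponding to words in the list. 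One iterates over lengths up to $\ell(\sw_0)=24$ to exhaust all possibilities.

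The main obstacle is that $\sF_4$ is exceptional, so there is no clean combinatorial parameterization of its elements (as there is, e.g., by signed permutations in type $\sB_n$), and the argument becomes a bounded enumeration rather than a structural derivation. The most efficient realization uses the minimal shortlex automaton constructed via Proposition~\ref{prop:conetype} and Theorem~\ref{thm:minimalshortlex} (or equivalently, as noted earlier in the paper, Derek Holt's KBMAG algorithms as implemented in MAGMA): one traverses the finitely many states, identifies those reached by an element $w$ with $D_L(w)=D_R(w)=\{s_1,s_2\}$, and reads off the $11$ elements listed. The essential conceptual content is the descent-set reformulation; thereafter the lemma reduces to a finite, if tedious, computation.
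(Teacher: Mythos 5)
Your proposal is correct and amounts to the same thing as the paper's proof, which simply states that the lemma ``is easily verified with the help of a computer'': both reduce the claim to a finite enumeration over the $1152$ elements of $\sF_4$. Your preliminary reformulation of the two double-coset conditions as the single descent-set condition $D_L(w)=D_R(w)=\{s_1,s_2\}$ (valid since $\{s_1,s_2\}$ and $\{s_3,s_4\}$ partition $S$) is accurate and a clean way to organise that computation, but it does not change the essentially computational nature of the verification.
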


\begin{proof}
This is easily verified with the help of a computer~\cite{MAGMA}. 
\end{proof}

Let $A$ be the set consisting of the $11$ elements in Lemma~\ref{lem:cosetsF4}. Let $\sigma$ be the nontrivial diagram automorphism of $\sF_4$, and let $X=A\cup A^{\sigma}\cup\{e,\sw_0\}$ (thus $|X|=24$). 

\begin{prop}
Let $\varphi$ be a weight function on $\sF_4$ with $\varphi(s_1)=\varphi(s_2)=a$ and $\varphi(s_3)=\varphi(s_4)=b$. Then 
$$
\boundW=\max\{0,3a,3b,5a+b,a+5b,11a+7b,7a+11b,12a+9b,9a+12b,12a+12b\},
$$
and if $a,b\neq 0$ then 
$
\cellW=\{x\in X\mid \varphi(x)=\boundW\}
$ with $X$ as above.
\end{prop}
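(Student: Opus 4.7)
The plan is to split the argument according to the signs of $a$ and $b$, using Proposition~\ref{prop:basic1}, Lemma~\ref{lem:positivesphericalcase}, and Lemma~\ref{lem:cosetsF4}. When $a,b>0$, Lemma~\ref{lem:positivesphericalcase} gives $\boundW=\varphi(\sw_0)=12a+12b$ and, since $W_\varphi^0=\{e\}$, $\cellW=\{\sw_0\}$; dually, when $a,b<0$ one has $\boundW=0=\varphi(e)$ and $\cellW=\{e\}$. Both $e$ and $\sw_0$ lie in $X$, contributing $0$ and $12a+12b$ to the claimed maximum.

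In the mixed case $a>0>b$ one has $W_\varphi^+=\langle s_1,s_2\rangle$ and $W_\varphi^-=\langle s_3,s_4\rangle$, so Proposition~\ref{prop:basic1} forces every element of $\cellW$ to be simultaneously of maximal length in its $(W_{\{s_1,s_2\}},W_{\{s_1,s_2\}})$-double coset and of minimal length in its $(W_{\{s_3,s_4\}},W_{\{s_3,s_4\}})$-double coset. By Lemma~\ref{lem:cosetsF4} the only such elements are those of $A$, hence $\cellW\subseteq A$ and $\boundW=\max_{x\in A}\varphi(x)$. The case $a<0<b$ follows by applying the diagram automorphism $\sigma$, which swaps $\{s_1,s_2\}\leftrightarrow\{s_3,s_4\}$, giving $\cellW\subseteq A^\sigma$ and $\boundW=\max_{x\in A^\sigma}\varphi(x)$.

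The remaining work, which I expect to be the only real (and elementary) obstacle, is to evaluate $\varphi$ on the eleven elements of $A$ and extract the minimal family that can realise the maximum for some $(a,b)$ with $a>0>b$. Writing $\varphi(x)=p_{12}(x)\,a+p_{34}(x)\,b$, where $p_{12}(x)$ and $p_{34}(x)$ count occurrences of letters from $\{s_1,s_2\}$ and $\{s_3,s_4\}$ in a reduced expression for~$x$, the eleven pairs are read directly off the list in Lemma~\ref{lem:cosetsF4}. Setting $r=-b/a>0$, the question becomes the upper envelope of the eleven affine functions $r\mapsto p_{12}(x)-p_{34}(x)\,r$; a short case analysis, with transitions at $r=2$, $r=1$, and $r=\tfrac12$, shows that only four elements are ever extremal, namely $121$, $121321$, $121321324321324321$, and $121321324321323432132$, producing the values $3a$, $5a+b$, $11a+7b$, $12a+9b$. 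Applying $\sigma$ yields the mirror list $3b$, $a+5b$, $7a+11b$, $9a+12b$ for the case $a<0<b$, and combining these ten values with $0$ and $12a+12b$ from the same-sign cases produces the claimed formula for $\boundW$. The identity $\cellW=\{x\in X\mid \varphi(x)=\boundW\}$ then follows in every sign case from the inclusions $\cellW\subseteq A$ (respectively $A^\sigma$, $\{\sw_0\}$, $\{e\}$), together with the easy check that $p_{12}(y)>p_{34}(y)$ for every $y\in A$, which, combined with $a-b>0$ in the case $a>0>b$, rules out any element of $X\setminus A$ from attaining $\boundW$; the remaining cases are symmetric.
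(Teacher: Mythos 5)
Your proposal is correct and follows essentially the same route as the paper: split by the signs of $a$ and $b$, use Proposition~\ref{prop:basic1} with Lemma~\ref{lem:cosetsF4} to confine the cell to $A$ (resp.\ $A^{\sigma}$, $\{e\}$, $\{\sw_0\}$), evaluate $\varphi$ on the eleven elements of $A$, and discard the values that can never realise the maximum. The only additions beyond the paper's argument are the explicit upper-envelope analysis in $r=-b/a$ and the final check that elements of $X\setminus A$ cannot attain the bound in the mixed case, both of which the paper leaves implicit; just note that at $a+b=0$ several intermediate elements of $A$ tie the maximum, so the correct statement is that the maximal \emph{value} always equals one of the four listed expressions, not that only four elements can lie in the cell.
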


\begin{proof}
Suppose that $\varphi(s_1)=\varphi(s_2)=a>0$ and $\varphi(s_3)=\varphi(s_4)=b<0$. By Proposition~\ref{prop:basic1} if $x\in\cellW$ then $x$ is both a maximal length $(W_{\{s_1,s_2\}},W_{\{s_1,s_2\}})$-double coset representative, and a minimal length $(W_{\{s_3,s_4\}},W_{\{s_3,s_4\}})$-double coset representative. Thus the bound occurs on the set $A$ (and only on the set $A$). The values of $\varphi(x)$ with $x\in A$ are $3a$, $5a+b$, $6a+2b$, $7a+3b$, $8a+4b$, $7a+5b$, $8a+4b$, $9a+5b$, $10a+6b$, $11a+7b$, $12a+9b$. Since $a>0$ and $b<0$ the maximum is attained on one of the elements $3a$, $5a+b$, $11a+7b$, $12a+9b$. Thus
\begin{align*}
\boundW&=\max\{\varphi(x)\mid x\in A\}=\max\{3a,5a+b,11a+7b,12a+9b\},
\end{align*}
and $\cellW=\{x\in A\mid \varphi(x)=\boundW\}$. If $a<0$ and $b>0$ there is a dual argument (interchanging the roles of $a$ and $b$). Combining this with the cases $a<0$ and $b<0$ (where the bound is $0$, attained at $e$ only), and $a>0$ and $b>0$ (where the bound is $\varphi(\sw_0)$, attained only at $\sw_0$), the result follows. 
\end{proof}

\subsection{Affine Coxeter groups}

In this section we explicitly describe the cone of bounded weight functions on an irreducible affine Coxeter system~$(W,S)$. We do not use Theorem~\ref{thm1:cones} (or Corollary~\ref{cor1:groups}) directly -- instead we make use of the affine structure, thus avoiding the need to explicitly compute an automaton recognising $\cL(W,S)$. 

It is convenient to index the irreducible affine Coxeter systems as $\tilde{\sA}_n$ ($n\geq 2$), $\tilde{\sB}_n$ ($n\geq 3$), $\tilde{\sC}_n$ ($n\geq 1$), $\tilde{\sD}_n$ ($n\geq 4$), $\tilde{\sE}_n$ ($n=6,7,8$), $\tilde{\sF}_4$, and $\tilde{\sG}_2$ (in particular, the dimension $1$ affine group is denoted $\tilde{\mathsf{C}}_1$ rather than $\tilde{\mathsf{A}}_1$). We associate a root system $\Phi$ to $(W,S)$ as follows. If $W$ is of type $\tilde{\sX}_n$ with $\sX\neq\sC$ then let $\Phi$ be an irreducible root system of type $\sX_n$, while if $\sX=\sC$ let $\Phi$ be the (non-reduced) root system of type $\mathsf{BC}_n$. Let $\{\alpha_1,\ldots,\alpha_n\}$ be a fixed set of simple roots of $\Phi$, and let $\Phi^+$ be the associated positive roots. Let $Q$ be the coroot lattice, and $P$ the coweight lattice, associated to $\Phi$. Let $\omega_1,\ldots,\omega_n\in P$ be the fundamental coweights (defined by $\langle\omega_i,\alpha_j\rangle=\delta_{i,j}$), and let $P^+=\NN\omega_1+\cdots+\NN\omega_n$.

 There is a standard realisation of $W$ as a semidirect product $W=Q\rtimes W_0$ where $W_0$ is the associated spherical Weyl group (see \cite[\S1.1, \S1.2]{GLP:23}, and in particular \cite[Remark~1.1]{GLP:23} for conventions on the $\mathsf{BC}_n$ root system). Let $V$ be the underlying vector space of $\Phi$, and let $C_0$ be the fundamental (closed) alcove. 

It is convenient to work with the extended affine Weyl group $\Wext=P\rtimes W_0$. Write $t_{\lambda}\in\Wext$ for the translation by $\lambda\in P$ (thus $t_{\lambda}(v)=v+\lambda$ for $v\in V$). We have $\Wext=W\rtimes \Omega$, where $\Omega= P/Q$. We extend weight functions $\varphi:W\to\RR$ to functions $\varphi:\Wext\to \RR$ be setting $\varphi(g)=0$ for all $g\in\Omega$ (we call such an extension a \textit{weight function on $\Wext$}, however note that it may not be a weight function in the strict sense of Section~\ref{sec:definitions}).

Let $\varphi:\Wext\to\RR$ be a weight function. We define $\varphi(\alpha)$, for $\alpha\in\Phi$, as follows. If $\Phi$ is reduced we set $\varphi(\alpha_i)=\varphi(s_i)$ for all $1\leq i\leq n$, and if $\Phi$ is of type $\mathsf{BC}_n$ we define $\varphi(\alpha_i)=\varphi(s_i)$ for $1\leq i\leq n-1$, $\varphi(2\alpha_n)=\varphi(s_0)$, and $\varphi(\alpha_n)=\varphi(s_n)-\varphi(s_0)$. In all cases extend the definition to $\Phi$ by declaring $\varphi(\alpha)=\varphi(\beta)$ whenever $\beta\in W_0\alpha$, and it is convenient to set $\varphi(\beta)=0$ if $\beta\notin\Phi$. 

 Let
$$
\rho(\varphi)=\frac{1}{2}\sum_{\alpha\in\Phi^+}\varphi(\alpha)\alpha.
$$
For $x\in W_0$ let $\Phi(x)=\{\alpha\in\Phi^+\mid x^{-1}\alpha\in-\Phi^+\}$. 

\begin{lemma}\label{lem:affine}
For $\la\in P^+$ and $x\in W_0$ we have
$$
\varphi(t_{\lambda})=\langle \la,2\rho(\varphi)\rangle\quad\text{and}\quad \varphi(x)=\sum_{\alpha\in\Phi(x)}\varphi(\alpha).
$$
\end{lemma}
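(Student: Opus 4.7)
The plan is to establish both identities by invoking the standard identification of reduced expressions with minimal galleries of alcoves: for any reduced expression $w=s_{i_1}\cdots s_{i_k}$ in $\Wext$, the value $\varphi(w)=\sum_j\varphi(s_{i_j})$ can be read off as a sum over the affine hyperplanes $H_j$ crossed by a gallery from the fundamental alcove $C_0$ to $wC_0$, where each $H_j$ contributes $\varphi(s_{i_j})$, a value depending only on the $\Wext$-conjugacy class of the reflection in $H_j$ (equivalently, only on the gradient root of $H_j$, up to the subtlety in type $\mathsf{BC}_n$).

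For the identity $\varphi(x)=\sum_{\alpha\in\Phi(x)}\varphi(\alpha)$ with $x\in W_0$, I would induct on $\ell(x)$. The base case $x=e$ is immediate. For the inductive step, pick a simple reflection $s=s_i$ of $W_0$ with $\ell(xs)<\ell(x)$, so $x=x's$ with $\ell(x)=\ell(x')+1$ and $\varphi(x)=\varphi(x')+\varphi(s)$. The standard identity for inversion sets under right multiplication gives $\Phi(x)=\Phi(x')\sqcup x'(\{\alpha_s,2\alpha_s\}\cap\Phi^+)$. Since $\varphi$ is constant on $W_0$-orbits of roots, the new contribution equals $\varphi(\alpha_s)$ in the reduced cases (matching $\varphi(s)$ by the definition $\varphi(\alpha_i)=\varphi(s_i)$) and equals $\varphi(\alpha_n)+\varphi(2\alpha_n)=\varphi(s_n)$ in the $\mathsf{BC}_n$ case at $s=s_n$ (by the calibrated definitions $\varphi(\alpha_n)=\varphi(s_n)-\varphi(s_0)$ and $\varphi(2\alpha_n)=\varphi(s_0)$). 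Combining with the inductive hypothesis yields the claim.

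For the identity $\varphi(t_\lambda)=\langle\lambda,2\rho(\varphi)\rangle$ with $\lambda\in P^+$, first note that $\ell(t_\lambda t_\mu)=\ell(t_\lambda)+\ell(t_\mu)$ for all $\lambda,\mu\in P^+$, so $\lambda\mapsto\varphi(t_\lambda)$ is additive on the monoid $P^+$ and extends uniquely to a linear functional on $V$. It therefore suffices to verify the identity on a spanning family of dominant coweights (say regular dominant $\lambda$). For such $\lambda$, a minimal gallery from $C_0$ to $t_\lambda C_0$ crosses exactly the affine hyperplanes $H_{\alpha,k}$ with $\alpha\in\Phi^+$ and $1\le k\le\langle\lambda,\alpha\rangle$. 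Each such hyperplane contributes $\varphi(\alpha)$ to $\varphi(t_\lambda)$, and in the $\mathsf{BC}_n$ case the two parallel families of hyperplanes with gradients $\alpha_n$ and $2\alpha_n$ contribute separately (with values $\varphi(\alpha_n)$ and $\varphi(2\alpha_n)$). Summing gives
\[
\varphi(t_\lambda)=\sum_{\alpha\in\Phi^+}\langle\lambda,\alpha\rangle\varphi(\alpha)=\langle\lambda,2\rho(\varphi)\rangle.
\]

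The main technical obstacle is the bookkeeping in the non-reduced $\mathsf{BC}_n$ case: the affine simple reflection $s_0$ corresponds to the hyperplane $H_{2\alpha_n,1}$ rather than to any translate of a hyperplane with gradient $\alpha_n$, and one must check that the definitions $\varphi(2\alpha_n)=\varphi(s_0)$ and $\varphi(\alpha_n)=\varphi(s_n)-\varphi(s_0)$ are precisely what is needed both for the $W_0$ induction at $s=s_n$ and for the gradient-by-gradient count in the $t_\lambda$ case.
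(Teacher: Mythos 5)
Your argument is correct and follows essentially the same route as the paper's proof: both identities are obtained by weighting the hyperplanes separating $C_0$ from $xC_0$ (resp.\ $t_\lambda C_0$) by the value of $\varphi$ on the gradient root, with the calibrated definitions $\varphi(2\alpha_n)=\varphi(s_0)$ and $\varphi(\alpha_n)=\varphi(s_n)-\varphi(s_0)$ absorbing the double counting in type $\mathsf{BC}_n$. Your explicit induction on $\ell(x)$ via the inversion-set recursion and the additivity remark for $\lambda\mapsto\varphi(t_\lambda)$ are just slightly more detailed renderings of the same gallery-counting idea.
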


\begin{proof}
The formula for $\varphi(x)$ with $x\in W_0$ follows by considering hyperplanes separating the alcove $C_0$ from the alcove $xC_0$ (in the non-reduced case, note that if $\alpha\in\Phi(x)$ and $2\alpha\in\Phi$ then $2\alpha\in\Phi(x)$ and $\varphi(\alpha)+\varphi(2\alpha)=\varphi(s_n)$). Similarly, the formula for $\varphi(t_{\la})$ follows by considering the hyperplanes separating $C_0$ from $t_{\la}(C_0)$. Since $\la\in P^+$, these hyperplanes are $H_{\alpha,k}$ with $1\leq k\leq \langle\lambda,\alpha\rangle$ for $\alpha\in\Phi^+$. In the reduced case, there is no double counting, and all hyperplanes in a parallelism class have the same weight $\varphi(\alpha)$. Thus, in the reduced case, 
$$
\varphi(t_{\lambda})=\sum_{\alpha\in\Phi^+}\langle\lambda,\alpha\rangle\varphi(\alpha)=\langle \lambda,2\rho(\varphi)\rangle.
$$
In the non-reduced case there is some double counting of the hyperplanes corresponding to the long and short roots (but not the middle length roots), but the definition of $\varphi$ ensures that after cancellation each hyperplane is counted with the appropriate weight (see \cite[Appendix~A]{Par:06b} for similar calculations in a related context). 
\end{proof}

Let 
$
V_0=\{v\in V\mid0\leq \langle v,\alpha_i\rangle\leq 1\text{ for all $i=1,\ldots,n$}\}
$
be \textit{Lusztig's box}, and let
$$
B_0=\{w\in\Wext\mid wC_0\subseteq V_0\}.
$$
The following theorem gives an explicit version of Corollary~\ref{cor1:groups} for extended affine Weyl groups.  

\begin{thm}\label{thm:affine1}
Let $\Wext$ be an extended affine Weyl group, and let $\varphi:\Wext\to\RR$ be a weight function. Then $\varphi$ is bounded if and only if $\langle\lambda,\rho(\varphi)\rangle\leq 0$ for all $\lambda\in P^+$. 

If $\varphi$ is bounded, then setting $\bb(\varphi)'=\max_{x\in W_0}\varphi(x)$ and $\bb(\varphi)''=\max_{y\in B_0}\varphi(y)$ we have $\boundW=\bb(\varphi)'+\bb(\varphi)''$, and
$$
\cellW=\{xt_{\la}y\mid x\in W_0,\,y\in B_0,\,\la\in P^+\text{ with }\varphi(x)=\bb(\varphi)',\,\varphi(y)=\bb(\varphi)'',\,\langle\la,\rho(\varphi)\rangle=0\}.
$$
Moreover, the bound $\boundW$ is attained on an element of the finite set $\{xy\mid x\in W_0,\,y\in B_0\}$. 
\end{thm}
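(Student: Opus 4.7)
The plan is to combine Lemma~\ref{lem:affine} with a length-additive decomposition of elements of $\Wext$ relative to Lusztig's box $B_0$. The crucial underlying fact is that every $w\in\Wext$ admits a factorisation $w = x\,t_\lambda\,y$ with $x\in W_0$, $\lambda\in P^+$, and $y\in B_0$, where the product is length-additive: $\ell(w)=\ell(x)+\ell(t_\lambda)+\ell(y)$. Geometrically this reflects the observation that the dominant Weyl chamber is tiled by translates $\mu+V_0$ of the box by $\mu\in P^+$, so that any alcove $wC_0$ can be brought into $V_0$ by a translation $t_{-\lambda}$ and then into a chosen location by an element of $W_0$, at each stage only ``uncrossing'' hyperplanes. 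Granting this factorisation, additivity of $\varphi$ on length-additive products together with Lemma~\ref{lem:affine} yields $\varphi(w)=\varphi(x)+\langle\lambda,2\rho(\varphi)\rangle+\varphi(y)$, which is the identity that drives everything else.

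First I would note that $W_0$ is finite by sphericity and $B_0$ is finite because $V_0$ is a bounded polytope containing only finitely many alcoves; hence $\bb(\varphi)'$ and $\bb(\varphi)''$ are maxima attained over explicit finite sets. For the boundedness criterion, one direction is immediate: if $\varphi$ is bounded on $\Wext$ then $\varphi(t_\lambda)=\langle\lambda,2\rho(\varphi)\rangle$ is bounded above uniformly in $\lambda\in P^+$, and replacing $\lambda$ by $n\lambda$ and letting $n\to\infty$ forces $\langle\lambda,\rho(\varphi)\rangle\le 0$. Conversely, when this sign condition holds the decomposition yields $\varphi(w)\le\bb(\varphi)'+0+\bb(\varphi)''$, hence $\varphi$ is bounded.

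For the formula $\boundW=\bb(\varphi)'+\bb(\varphi)''$, the decomposition immediately supplies the upper bound. For the reverse inequality I would pick $x_0\in W_0$ and $y_0\in B_0$ attaining the respective maxima; the decomposition applied with $\lambda=0$ exhibits $x_0 y_0\in\Wext$ with $\ell(x_0 y_0)=\ell(x_0)+\ell(y_0)$, so $\varphi(x_0 y_0)=\bb(\varphi)'+\bb(\varphi)''$. This simultaneously establishes the final assertion that $\boundW$ is attained on the finite set $\{xy\mid x\in W_0,\,y\in B_0\}$. The description of $\cellW$ then follows because, in any decomposition $w=xt_\lambda y$, each of $\varphi(x)\le\bb(\varphi)'$, $\varphi(y)\le\bb(\varphi)''$ and $\langle\lambda,2\rho(\varphi)\rangle\le 0$ is an upper bound, so equality $\varphi(w)=\boundW$ forces equality in each summand, namely $\varphi(x)=\bb(\varphi)'$, $\varphi(y)=\bb(\varphi)''$, and $\langle\lambda,\rho(\varphi)\rangle=0$.

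The main obstacle I foresee is the length-additive factorisation $w=xt_\lambda y$. Although this is folklore in the theory of extended affine Weyl groups, a rigorous justification requires an alcove-walk argument comparing the affine hyperplanes separating $C_0$ from $wC_0$ and showing that they partition cleanly into the three blocks indexed by $x$, $t_\lambda$, and $y$; additional care is needed in the non-reduced $\sBC_n$ case, since Lemma~\ref{lem:affine} itself depends there on a delicate cancellation among parallel hyperplanes. A minor subsidiary point is verifying that the zero-extension of $\varphi$ to $\Omega$ is compatible with the factorisation, which reduces to $\Omega$-invariance of $\varphi$ on $W$ (itself a consequence of $\Omega$ permuting simple generators within conjugacy classes).
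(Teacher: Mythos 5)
Your proposal is correct and follows essentially the same route as the paper: necessity of the sign condition via $\varphi(t_{k\la})=k\langle\la,2\rho(\varphi)\rangle$, and everything else via the length-additive factorisation $w=xt_{\la}y$ with $x\in W_0$, $\la\in P^+$, $y\in B_0$, combined with Lemma~\ref{lem:affine}. The paper likewise treats the factorisation as ``well known and easy to prove,'' so the obstacle you flag is exactly the step the authors also leave to the reader.
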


\begin{proof}
If $\varphi$ is bounded, then for $\la\in P^+$ and $k\geq 0$ Lemma~\ref{lem:affine} gives $\varphi(t_{k\la})=k\langle\la,2\rho(\varphi)\rangle$, and hence $\langle\la,\rho(\varphi)\rangle\leq 0$ for all $\la\in P^+$. For the converse, it is well known and easy to prove that each $w\in \Wext$ can be written, in a unique way, as $w=xt_{\la}y$ with $x\in W_0$, $\la\in P^+$, and $y\in B_0$. Moreover, for all $x\in W_0$, $\la\in P^+$, and $y\in B_0$ we have $\ell(xt_{\la}y)=\ell(x)+\ell(t_{\la})+\ell(y)$. Thus
\begin{align}\label{eq:weights}
\varphi(w)=\varphi(x)+\langle\la,2\rho(\varphi)\rangle+\varphi(y),
\end{align}
and since $W_0$ and $B_0$ are finite sets it follows that if $\langle\la,2\rho(\varphi)\rangle\leq 0$ for all $\la\in P^+$ then $\varphi$ is bounded. The remaining statements now easily follow from (\ref{eq:weights}). 
\end{proof}

The irreducible affine Coxeter groups admitting non-constant weight functions are the $\tilde{\sB}_n$ ($n\geq 3$), $\tilde{\sC}_n$ ($n\geq 1$), $\tilde{\sF}_4$, and $\tilde{\sG}_2$ cases. Since $\tilde{\sG}_2$ is a special case of the triangle group $\Delta(2,3,2m)$ (with $m=3$, see Example~\ref{ex:232m}) we will not consider it further here. We consider the remaining cases below.

%
%
%
%
%

\begin{example}[The group $\tilde{\mathsf{F}}_4$]
 
Consider the group $\tilde{\sF}_4$. Using Bourbaki conventions the roots $\alpha_1$ and $\alpha_2$ are long in $\Phi$, and the affine generator $s_0$ satisfies $(s_0s_1)^3=1$. Let $\varphi(s_0)=\varphi(s_1)=\varphi(s_2)=a$ and $\varphi(s_3)=\varphi(s_4)=b$. Direct calculation gives
$$
2\rho(\varphi)=(10a+6b)\alpha_1+(18a+12b)\alpha_2+(24a+18b)\alpha_3+(12a+10b)\alpha_4
$$
and so by Theorem~\ref{thm:affine1} the weight function $\varphi$ is bounded if and only if 
$
5a+3b\leq 0$, $3a+2b\leq 0$, $4a+3b\leq 0$, and $6a+5b\leq 0$ (these inequalities arise by considering $\langle\omega_i,2\rho(\varphi)\rangle\leq 0$ for $i=1,2,3,4$). The inequalities $3a+2b\leq 0$ and $4a+3b\leq 0$ are redundant, and so we see that a weight function on $\tilde{\mathsf{F}}_4$ is bounded if and only if $5a+3b\leq 0$ and $6a+5b\leq 0$. 
\end{example}
%
%
%

\begin{example}[The group $\tilde{\mathsf{B}}_n$]
Consider the group $\tilde{\sB}_n$, $n\geq 3$. If $\varphi$ is a weight function then $\varphi(s_0)=\varphi(s_1)=\cdots=\varphi(s_{n-1})=a$ and $\varphi(s_n)=b$. Using standard Bourbaki conventions, the short roots are $e_k$, $k=1,\ldots,n$, and the long roots are $e_i-e_j,e_i+e_j$ with $1\leq i<j\leq n$. Thus we have
\begin{align*}
2\rho(\varphi)&=\sum_{i=1}^n[2(n-i)a+b]e_i.
\end{align*}
Since $\omega_j=e_1+\cdots+e_j$ Theorem~\ref{thm:affine1} gives that $\varphi$ is bounded if and only if 
$
(2n-j-1)a+b\leq 0
$
for all $j=1,\ldots,n$. It follows that a weight function is bounded if and only if $2(n-1)a+b\leq 0$ and $(n-1)a+b\leq 0$. 
\end{example}

\begin{example}[The group $\tilde{\mathsf{C}}_n$]
Consider the group $\tilde{\sC}_n$. Let $n\geq 2$. If $\varphi$ is a weight function we write $\varphi(s_1)=\cdots=\varphi(s_{n-1})=b$, $\varphi(s_n)=a$, and $\varphi(s_0)=c$. 
In the standard setup of the $\mathsf{BC}_n$ root system (see~\cite[Remark~1.1]{GLP:23}) we compute
$$
\rho(\varphi)=\frac{1}{2}\sum_{i=1}^n(a+c+2(n-i)b)e_i.
$$
Since $\omega_i=e_1+\cdots+e_i$ for $1\leq i\leq n$ Theorem~\ref{thm:affine1} gives that $\varphi$ is bounded if and only if
$a+c+(2n-i-1)b\leq 0$ for all $1\leq i\leq n$. It follows that a weight function $\varphi$ on $\tilde{\sC}_n$ is bounded if and only if $a+c+2(n-1)b\leq 0$ and $a+c+(n-1)b\leq 0$. 
\end{example}

\subsection{Bounded representations of Hecke algebras}\label{subsec:5}

Let $(W,S)$ be a Coxeter system, and let $\psi:W\to \ZZ_{\geq 0}$ be a non-negative integer valued weight function on $(W,S)$ with $\psi(s)> 0$ for all $s\in S$. Recall the definition of the associated weighted Hecke algebra $H=H(W,S,\psi)$ from the introduction. 
We now give the proof of Corollary~\ref{cor1:reps}.

\begin{proof}[Proof of Corollary~\ref{cor1:reps}]
Let $\pi$ be a $1$-dimensional representation. For $s\in S$ the quadratic relation $T_s^2=1+(\sq^{\psi(s)}-\sq^{-\psi(s)})T_s$ implies that $\pi(T_s)\in\{\sq^{\psi(s)},-\sq^{-\psi(s)}\}$, and if $m_{s,t}$ is odd then the relation $T_sT_tT_s\cdots =T_tT_sT_t\cdots$ ($m_{s,t}$ terms on each side) forces $\pi(T_s)=\pi(T_t)$. Conversely, each choice $\pi(T_s)\in\{\sq^{\psi(s)},-\sq^{-\psi(s)}\}$ with $\pi(T_s)=\pi(T_t)$ whenever $m_{s,t}$ is odd extends uniquely to a $1$-dimensional representation by Matsumoto's Theorem~\cite{Mat:64}. It follows that the map $\varphi:W\to\ZZ$ with $\varphi(x)=\deg\pi(T_x)$ is a $\ZZ$-valued weight function on $W$, and that $\varphi$ is bounded if and only if the representation $\pi$ is bounded. Moreover, $\Gamma(\pi)=\Gamma_{W,S}(\varphi)$, and the result now follows from Corollary~\ref{cor1:groups}. 
\end{proof}

\begin{example}
Let $W=\Delta(2,4,6)$ be the hyperbolic triangle group generated by $S=\{s,t,u\}$ (see Example~\ref{ex:246}). Let $\psi:W\to \ZZ_{\geq 0}$ be the weight function with $\psi(s)=\psi(t)=\psi(u)=1$, and let $H=H(W,S,\psi)$ (the equal parameter Hecke algebra). By the proof of Corollary~\ref{cor1:reps}, there is a $1$-dimensional representation $\pi$ of $H$ with $\pi(T_s)=-\sq^{-1}$, $\pi(T_t)=\sq$, and $\pi(T_u)=-\sq^{-1}$. The weight function $\varphi(x)=\deg\pi(T_x)$ for $x\in W$ is the weight function $\varphi=\varphi_3$ from Example~\ref{ex:246}, and by the proof of Corollary~\ref{cor1:reps} the bound of $\pi$ is $\bb(\pi)=\bb_{W,S}(\varphi_3)=1$ and the cell recognised by $\pi$ is $\Gamma(\pi)=\Gamma_{W,S}(\varphi_3)$ (the regular set recognised by the automaton in Figure~\ref{fig:246a}).
\end{example}

%
%
%

\bibliographystyle{plain}

\bigskip
  \footnotesize

  \noindent J\'er\'emie Guilhot, \textsc{Institut Denis Poisson, Universit\'e de Tours,
    37200 Tours, France}\par\nopagebreak
 \noindent \textit{E-mail address:} \texttt{Jeremie.Guilhot@lmpt.univ-tours.fr}
 \smallskip
 
  \noindent Eloise Little, \textsc{School of Mathematics and Statistics, University of Sydney, NSW 2006, Australia}\par\nopagebreak
 \noindent \textit{E-mail address:} \texttt{E.Little@maths.usyd.edu.au}
\smallskip

 \noindent James Parkinson, \textsc{School of Mathematics and Statistics, University of Sydney, NSW 2006, Australia}\par\nopagebreak
 \noindent \textit{E-mail address:} \texttt{jamesp@maths.usyd.edu.au}

\end{document}